\def\ZZ{\ensuremath{\mathbb   Z}}
\def\Q{\ensuremath{\mathbb   Q}}
\def\QQ{\ensuremath{\mathbb   Q}}
\def\CC{\ensuremath{\mathbb   C}}
\def\RR{\ensuremath{\mathbb   R}}
\def\PP{\ensuremath{\mathbb   P}}
\def\cB{\ensuremath{\mathcal B}}
\def\cE{\ensuremath{\mathcal E}}
\def\cH{\ensuremath{\mathcal H}}
\def\cL{\ensuremath{\mathcal L}}
\def\cW{\ensuremath{\mathcal W}}
\DeclareMathOperator{\Hom}{Hom}
\DeclareMathOperator{\Ext}{Ext}
\theoremstyle{plain}
\newtheorem{thm}{Théorème}[section]
\newtheorem{lem}[thm]{Lemme}
\newtheorem{prop}[thm]{Proposition}
\theoremstyle{definition}
\newtheorem{defn}[thm]{Définition}
\newtheorem{rem}{Remarque}[thm]
\author[*]{Karim Belabas}
\author[**]{Dominique Bernardi}
\author[***]{Bernadette Perrin-Riou}
\affil[*]{Univ. Bordeaux, CNRS, INRIA, IMB, UMR 5251, F-33400 Talence, France}
\affil[**]{IMJ-PRG, UPMC, 4 place Jussieu, F-75005 Paris, France}
\affil[***]{Laboratoire de Mathématiques d'Orsay, Univ. Paris-Sud, CNRS, Université Paris-Saclay,
F-91405 Orsay, France.}
\title{La constante de Manin et le degré modulaire d'une courbe elliptique}
\date{\today}
\begin{document}
\maketitle

\selectlanguage{english}
\begin{abstract}
We revisit the calculation of the strong Weil curve in an isogeny class of elliptic
curves over $\QQ$, of the Manin constant and modular degree of an elliptic curve, using modular
symbols as defined in \cite{PS}, now implemented in Pari/GP. There is no innovation claim.
\end{abstract}
\selectlanguage{french}

On revisite les procédures de calcul de la courbe de Weil forte d'une classe d'isogénie
de courbes elliptiques sur $\QQ$, de la constante de Manin et du degré modulaire
d'une courbe elliptique en utilisant les symboles modulaires tels qu'ils sont
décrits dans \cite{PS} et désormais implémentés dans Pari/GP.
Il n'y a aucune prétention à une nouveauté quelconque.

\thanks{
Le troisième auteur (en particulier) remercie chaleureusement
Bill Allombert pour l'avoir lancée sur cette question, ce qui l'a obligée
à relire les articles fondateurs des années 1970 sur cette question.

Les calculs présentés à la fin de cet article ont été réalisés sur la
plateforme PLAFRIM, portée par l'action de développement INRIA PlaFRIM
avec le soutien de l'IMB, du LaBRI et d'autres entités:
Conseil Régional d'Aquitaine, FeDER, Université de Bordeaux et CNRS (voir
\url{https://www.plafrim.fr/en/home/}).
}
\tableofcontents

\section{Rappels sur les constantes de Manin et le degré modulaire}
Reprenons quelques définitions et énoncés de \cite{Mazur72}.

\begin{defn}
Une \textsl{paramétrisation modulaire} $\pi$ d'une courbe elliptique $E/\QQ$ de conducteur $N$
est un morphisme non constant $\pi_{\text{mod}}: X_0(N) \to E$
défini sur $\QQ$ tel que l'image de $\infty$ est $0$ et
tel que si $\omega$ est une forme différentielle invariante
non nulle de $E$ sur $\QQ$, l'image réciproque $\pi^*\omega$ sur $X_0(N)$
vue comme une forme modulaire parabolique est de niveau exactement $N$.
Autrement dit, il existe un rationnel $c$ non nul et une forme $f$ primitive de niveau $N$
(vecteur propre pour tous les opérateurs de Hecke de niveau $N$ et normalisée
par $a_1(f)=1$) tels que $\pi_{\text{mod}}^*\omega=c \omega_f$
avec $\omega_f=2i\pi f(z)dz \in (1+q\ZZ[[q]])\, dq$ pour $q=\exp(2 i\pi z)$.\end{defn}

\begin{defn}\begin{itemize}
\item Le \textsl{degré modulaire} $\deg(E)$ de $E$ est le degré minimal parmi
les paramétrisations modulaires de $E$.
\item Soit $\omega_E$ une forme différentielle de Néron de $E$.
La \textsl{constante de Manin} de $E$ est le rationnel $c_E > 0$
tel que $\pi_{\min,E}^* \omega_E =c_E \omega_f$ pour une paramétrisation minimale
$\pi_{\min,E}$ de $E$.
\end{itemize}
\end{defn}
Soit $\cE$ une classe d'isogénie de courbes elliptiques sur $\QQ$.
Il existe une unique courbe $E_1$ dans $\cE$ et une unique paramétrisation modulaire
$\pi_1: X_0(N) \to E_1$ (à isomorphisme près) telle que toute paramétrisation modulaire
$\pi:X_0(N) \to E$ pour $E \in \cE$ se factorise par $\pi_1$.
\begin{equation*}
  \begin{tikzcd}
    X_0(N) \arrow{r}{\pi_1} \arrow[swap]{dr}{\pi} & E_1 \arrow{d}{} \\ & E
  \end{tikzcd}
\end{equation*}
Le degré de $\pi_1: X_0(N) \to E_1$ est minimal parmi les degrés des paramétrisations
modulaires des courbes elliptiques de la classe d'isogénie $\cE$.
Si $f$ est la forme modulaire parabolique associée à $\cE$,
la courbe $E_1$ est la \textsl{courbe de Weil forte associée à $f$}.

On la construit de la manière suivante.
Soit $\pi': X_0(N) \to E'$ une paramétrisation modulaire de $E'$ dans la classe
d'isogénie $\cE$.
Par passage à la jacobienne, on en déduit un homomorphisme de variétés
abéliennes $\pi: J_0(N) \to E'$. Soit $A$ le noyau de $\pi$.
La composante connexe $A^0$ de l'élément neutre dans $A$ est une variété
abélienne qui ne dépend pas de la paramétrisation $\pi'$,
$E_1 = J_0(N)/A^0$ est une courbe elliptique et on a le diagramme commutatif
\begin{equation*}
  \begin{tikzcd}
    X_0(N) \arrow{r}{\pi_1} \arrow[swap]{dr}{\pi'} & J_0(N)/A^0=E_1 \arrow{d}{} \\ & E'
  \end{tikzcd}
\end{equation*}

\begin{lem} Soit $\pi:X_0(N) \to E$ une paramétrisation modulaire
minimale. Alors, $E$ est une courbe elliptique de Weil forte si et seulement si
$H_1(X_0(N), \ZZ) \to H_1(E,\ZZ)$ est surjective.
\end{lem}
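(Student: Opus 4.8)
The plan is to reduce everything to the associated Jacobians and then to the period lattices, where surjectivity becomes a statement about finite-index sublattices. First I would pass from $\pi:X_0(N)\to E$ to the induced homomorphism of abelian varieties $\overline{\pi}:J_0(N)\to E$, obtained from the universal property of the Jacobian (with base point $\infty\mapsto 0$) after identifying $E$ with its own Jacobian. Since the Abel--Jacobi map $j:X_0(N)\hookrightarrow J_0(N)$ induces an isomorphism $H_1(X_0(N),\ZZ)\xrightarrow{\ \sim\ }H_1(J_0(N),\ZZ)$, the map $\pi_*$ on $H_1$ is identified with $\overline{\pi}_*$. By the factorization property recalled above, $\pi=\psi\circ\pi_1$, where $\pi_1:X_0(N)\to E_1$ is the parametrization of the strong Weil curve and $\psi:E_1\to E$ is an isogeny; on homology this reads $\pi_*=\psi_*\circ q_*$, where $q:J_0(N)\to E_1=J_0(N)/A^0$ is the quotient map.

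The crux is the claim that $q_*:H_1(J_0(N),\ZZ)\to H_1(E_1,\ZZ)$ is \emph{surjective}, and this is precisely where passing to the connected component $A^0$ matters. Writing $J_0(N)(\CC)=V/\Lambda$ with $\Lambda=H_1(J_0(N),\ZZ)$, the abelian subvariety $A^0$ corresponds to a complex subspace $W\subseteq V$ for which $W\cap\Lambda$ is a full lattice in $W$, so that $E_1(\CC)=(V/W)/\overline{\Lambda}$ with $\overline{\Lambda}$ the image of $\Lambda$. Since $W$ is a subspace, $\Lambda/(\Lambda\cap W)$ is torsion-free and maps isomorphically onto $\overline{\Lambda}$, whence $H_1(E_1,\ZZ)=\overline{\Lambda}=\im(q_*)$. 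I expect this complex-torus bookkeeping, resting on the fact that $A^0$ is genuinely an abelian subvariety, to be the main technical point, the rest being formal. By contrast, had one quotiented by the full, possibly disconnected, kernel $A$, one would land on $E'$, on which the image of $H_1(J_0(N),\ZZ)$ is only a finite-index sublattice; surjectivity may then fail, which is exactly why $E'$ need not be the strong Weil curve.

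Granting this, $\im(\pi_*)=\psi_*\big(H_1(E_1,\ZZ)\big)$. For the isogeny $\psi:E_1\to E$ the map $\psi_*:H_1(E_1,\ZZ)\to H_1(E,\ZZ)$ is injective between rank-two lattices, with cokernel of order $\deg\psi$; hence $\psi_*$ is surjective if and only if $\deg\psi=1$, i.e.\ if and only if $\psi$ is an isomorphism. This settles the backward implication with no appeal to minimality: if $\pi_*$ is surjective, then $\psi_*$ is onto, $\psi$ is an isomorphism, and $E\cong E_1$ is the strong Weil curve.

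For the forward implication minimality is indispensable. If $E$ is the strong Weil curve then $E\cong E_1$, and by the universal property every parametrization of $E$ is of the form $g\circ\pi_1$ with $g\in\operatorname{End}(E_1)\setminus\{0\}$, so that $\deg(g\circ\pi_1)=\deg(g)\,\deg(\pi_1)$. Minimality of $\pi$ forces $\deg g=1$, i.e.\ $g$ is an automorphism, whence $\psi=g$ has $\psi_*$ an isomorphism and $\pi_*=\psi_*\circ q_*$ is surjective. One cannot drop minimality: the non-minimal parametrization $[n]\circ\pi_1$ with $n>1$ still lands on the strong Weil curve $E_1$, yet induces multiplication by $n$ on $H_1(E_1,\ZZ)$, which is not surjective.
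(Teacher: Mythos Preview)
Your proof is correct. Note that the paper does not actually prove this lemma; it simply cites lemme~3, \S4 of \cite{Mazur72}. Your argument---passing to the Jacobian, using connectedness of $A^0$ to ensure that $q_*:H_1(J_0(N),\ZZ)\to H_1(E_1,\ZZ)$ is surjective, and then observing that the residual isogeny $\psi:E_1\to E$ is onto on $H_1$ iff $\deg\psi=1$---is the standard one and supplies precisely the details the paper omits. Your remark that minimality is needed only for the forward implication, together with the counterexample $[n]\circ\pi_1$, is a useful clarification.
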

Voir lemme 3, § 4 de \cite{Mazur72}.
\section{Espace des symboles modulaires}

\subsection{Lien entre l'espace des symboles modulaires et les cohomologies}
Soit $\Delta = \ZZ[\PP^1(\Q)]$ le groupe des diviseurs sur $\PP^1(\QQ)$
et $\Delta_0$ le sous-module des diviseurs de $\Delta$ de degré 0.
Les éléments de $\Delta_0$ peuvent être vus comme des combinaisons linéaires
dans $\ZZ$ de chemins $(a,b)$ pour $a$ et $b$ dans $\PP^1(\QQ)$.
Les modules $\Delta$ et $\Delta_0$ sont munis d'une action de $SL_2(\ZZ)$
et donc de $\Gamma=\Gamma_0(N)$.
On a une suite exacte
\begin{equation*}
0 \to \Hom_\Gamma(\ZZ, \QQ) \to \Hom_\Gamma(\Delta, \QQ)\to
\Hom_\Gamma(\Delta_0, \QQ)\to
\Ext^1_\Gamma(\ZZ,\QQ)\to \Ext^1_\Gamma(\Delta,\QQ)
\to \Ext^1_\Gamma(\Delta_0,\QQ) \to 0
\end{equation*}
ou en explicitant
\begin{equation*}
0 \to \QQ \to
\oplus_{s\in C(\Gamma)} H^0(\Gamma_s, \QQ)\to
\Hom_\Gamma(\Delta_0, \QQ)\to
H^1(\Gamma,\QQ)\to \oplus_{s\in C(\Gamma)} H^1(\Gamma_s, \QQ)
\to \Ext^1_\Gamma(\Delta_0,\QQ) \to 0
\end{equation*}
où $C(\Gamma)$ est un système de représentants de $\Gamma\backslash \PP^1(\QQ)$
et
$\Gamma_s$ le stabilisateur de $s$ dans $\Gamma$.
L'application $\Hom_\Gamma(\Delta_0, \QQ)\to H^1(\Gamma,\QQ)$
est donnée par $\Phi \mapsto
\Big(\gamma \mapsto \Phi\big((0,\gamma^{-1} 0)\big)\Big)$.
La formule des coefficients universels en cohomologie implique la suite exacte
$$ 0 \to \Ext^1(H_{0}(X_0(N),\ZZ), \ZZ)
\to H^1(X_0(N),\ZZ) \to \Hom(H_{1}(X_0(N),\ZZ),\ZZ)\to 0
$$
On en déduit que
$$H^1(X_0(N),\ZZ) = \Hom(H_1(X_0(N),\ZZ),\ZZ) $$

Rappelons les résultats de Manin (\cite{manin72}).
Si $a$ et $b$ dans $\PP^1(\QQ)$ sont équivalents modulo $\Gamma_0(N)$,
l'image de l'arc géodésique $(a,b)$ joignant $a$ à $b$ dans $\cH$
définit un élément de $H_1(X_0(N),\ZZ)$. De plus, si $a \in \PP^1(\QQ)$,
les éléments de la forme $(a,b)$ avec $b$ équivalent
à $a$ modulo $\Gamma_0(N)$ engendrent $H_1(X_0(N), \ZZ)$ (\cite{manin72}, proposition 1.4).
On peut prolonger cette application aux arcs géodésiques de la forme $(a,b)$
avec $a$ et $b$ dans $\PP^1(\QQ)$ à condition d'étendre les valeurs à
$H_1(X_0(N), \QQ)=\QQ \otimes H_1(X_0(N), \ZZ)$.
On en déduit une application surjective
$$\Delta_0 \to H_1(X_0(N),\QQ)$$
et par dualité un homomorphisme injectif
$$\Hom_\QQ(H_1(X_0(N),\QQ), \QQ) \to \Hom_\ZZ(\Delta_0, \QQ)$$
à valeurs dans les invariants par $\Gamma_0(N)$, donc
un homomorphisme injectif
$$\Hom_\QQ(H_1(X_0(N),\QQ), \QQ) \to \Hom_{\Gamma_0(N)}(\Delta_0, \QQ)$$
que l'on note provisoirement $h \to \tilde{h}$.
Son image est d'intersection nulle avec l'image de
$\Hom_{\Gamma_0(N)}(\Delta, \QQ)$. En effet, soit $h$
un élément de $\Hom_\QQ(H_1(X_0(N),\QQ), \QQ)$ tel que
$\tilde{h}$ est l'image de $\tilde{h}_1\in \Hom_{\Gamma_0(N)}(\Delta, \QQ)$. Si
$c$ est un lacet partant de $x_0 \in \PP^1(\QQ)$, son relèvement dans $\cH$
joint $x_0$ à $\gamma^{-1} x_0$ pour un $\gamma$ dans $\Gamma_0(N)$
et $h(c)=\tilde{h}\big((x_0,\gamma^{-1} x_0)\big)
 = \tilde{h}_1([\gamma^{-1} x_0])-\tilde{h}_1([x_0])=0$
puisque $\tilde{h}_1$ est invariant par $\Gamma_0(N)$.

Pour une raison de dimension (les deux espaces sont de dimension
2 fois le genre de $X_0(N)$), on en déduit les isomorphismes
$$
\begin{CD}
H^1(X_0(N),\QQ) @>\cong>> \Hom_\QQ(H_1(X_0(N),\QQ), \QQ) @>\cong>>
  \Hom_{\Gamma_0(N)}(\Delta_0, \QQ)/\Hom_{\Gamma_0(N)}(\Delta, \QQ)
\end{CD}
$$

\subsection{Structure entière de l'espace des symboles modulaires}
Remarquons que le $\ZZ$-module $H^1(\Gamma_0(N), \ZZ)$ s'injecte dans
$H^1(\Gamma_0(N), \QQ)$. Cela se voit facilement en écrivant la suite de cohomologie
pour la suite exacte $0\to \ZZ \to \QQ \to \QQ/\ZZ \to 0$
de $\Gamma_0(N)$-modules.

On note $W=\Hom_{\Gamma_0(N)}(\Delta_0,\QQ)/ \Hom_{\Gamma_0(N)}(\Delta,\QQ)$
et $W_{par}$ le supplémentaire de $\Hom_{\Gamma_0(N)}(\Delta,\QQ)$ dans $\Hom_{\Gamma_0(N)}(\Delta_0,\QQ)$
stable par l'algèbre de Hecke. Soient $\cW \subset W$ et $\cW_{par} \subset W_{par}$
les sous-$\ZZ$-modules formés des éléments dont l'image dans
$H^1(\Gamma_0(N), \QQ)$ appartient à l'image de $H^1(\Gamma_0(N), \ZZ)$ dans
$H^1(\Gamma_0(N), \QQ)$.

\begin{lem} Les $\ZZ$-modules $\cW$ et $\cW_{par}$ sont de type fini
et de rang maximal, respectivement dans $W$ et $W_{par}$.
\end{lem}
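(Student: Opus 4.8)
Le plan est d'identifier $\cW$ et $\cW_{par}$ \`a l'intersection d'un sous-$\QQ$-espace de $V := H^1(\Gamma_0(N),\QQ)$ avec un r\'eseau de rang maximal, puis de conclure par un argument \'el\'ementaire de r\'eseaux. D'abord, la suite exacte longue rappel\'ee au d\'ebut de la section fournit une injection $W \hookrightarrow \Ext^1_\Gamma(\ZZ,\QQ) = H^1(\Gamma_0(N),\QQ)$, puisque le noyau de l'application de bord $\Hom_\Gamma(\Delta_0,\QQ) \to \Ext^1_\Gamma(\ZZ,\QQ)$ est exactement l'image de $\Hom_\Gamma(\Delta,\QQ)$. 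Comme la projection $\Hom_\Gamma(\Delta_0,\QQ) \to W$ induit un isomorphisme $W_{par} \xrightarrow{\sim} W$, le sous-espace $W_{par}$ s'injecte lui aussi dans $H^1(\Gamma_0(N),\QQ)$ avec la m\^eme image; on identifie d\'esormais $W$ et $W_{par}$ \`a ce sous-$\QQ$-espace de $V$.

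Ensuite, je v\'erifierais que $L := \im\big(H^1(\Gamma_0(N),\ZZ) \to V\big)$ est un r\'eseau de rang maximal dans $V$. Le groupe $\Gamma_0(N)$ \'etant d'indice fini dans $SL_2(\ZZ)$, il est de type fini, de sorte que $H^1(\Gamma_0(N),\ZZ) = \Hom(\Gamma_0(N)^{\mathrm{ab}},\ZZ)$ est un $\ZZ$-module libre de rang fini $r$, avec $\dim_\QQ V = r$. L'injectivit\'e de $H^1(\Gamma_0(N),\ZZ) \to V$, d\'ej\`a signal\'ee au moyen de la suite $0 \to \ZZ \to \QQ \to \QQ/\ZZ \to 0$, entra\^ine alors que $L$ est libre de rang $r$ et que $L \otimes \QQ = V$.

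Enfin, sous ces identifications, $\cW = W \cap L$ et $\cW_{par} = W_{par} \cap L$. En tant que sous-groupes du $\ZZ$-module libre de type fini $L$, ce sont des $\ZZ$-modules libres de type fini, de rang au plus $\dim_\QQ W$. Pour la maximalit\'e du rang, je choisirais une base $w_1,\dots,w_k$ de $W$: chaque $w_i$ appartenant \`a $V = L \otimes \QQ$, il existe un entier $d \ge 1$ tel que $d\,w_i \in L$ pour tout $i$, donc $d\,w_i \in W \cap L = \cW$. Les $d\,w_i$ demeurant $\QQ$-lin\'eairement ind\'ependants, on a $\cW \otimes \QQ = W$, et $\cW$ est donc de rang $\dim_\QQ W$, c'est-\`a-dire de rang maximal dans $W$; le m\^eme raisonnement vaut pour $\cW_{par}$ dans $W_{par}$.

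Le point le plus substantiel est la v\'erification que $L$ est un r\'eseau de rang maximal, laquelle repose sur la finitude de l'indice $[SL_2(\ZZ):\Gamma_0(N)]$ et sur l'\'egalit\'e des rangs de $H^1(\Gamma_0(N),\ZZ)$ et de $H^1(\Gamma_0(N),\QQ)$; une fois ceci acquis, l'\'enonc\'e se ram\`ene au fait \'el\'ementaire qu'un sous-$\QQ$-espace rencontre un r\'eseau de rang maximal selon un r\'eseau de rang maximal de ce sous-espace.
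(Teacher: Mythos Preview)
Le papier ne fournit pas de d\'emonstration de ce lemme: il se contente de renvoyer \`a \cite{shimura}, Proposition~1, pour un argument analogue. Votre preuve est donc plus compl\`ete que ce que le texte offre, et elle est correcte. L'injection $W \hookrightarrow H^1(\Gamma_0(N),\QQ)$ d\'ecoule bien de la suite exacte longue rappel\'ee dans la section; le fait que $L = \im\big(H^1(\Gamma_0(N),\ZZ)\to H^1(\Gamma_0(N),\QQ)\big)$ soit un r\'eseau de rang maximal r\'esulte directement de ce que $\Gamma_0(N)$, d'indice fini dans $SL_2(\ZZ)$, est de type fini, via l'identification $H^1(\Gamma,\ZZ)=\Hom(\Gamma^{\mathrm{ab}},\ZZ)$; et la conclusion par intersection d'un r\'eseau de rang maximal avec un sous-$\QQ$-espace est un fait \'el\'ementaire de la th\'eorie des r\'eseaux. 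L'identification de $W_{par}$ \`a $W$ par la projection, et donc de $\cW_{par}$ \`a $\cW$, est \'egalement justifi\'ee puisque $W_{par}$ est d\'efini comme suppl\'ementaire Hecke-stable de l'image de $\Hom_\Gamma(\Delta,\QQ)$. Vous fournissez ainsi, de fa\c{c}on autonome, ce que le papier laisse en r\'ef\'erence.
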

Pour une démonstration analogue, voir par exemple \cite{shimura}, Proposition 1.

\section{Espace associé à une forme modulaire et à une courbe elliptique}
\subsection{Structure rationnelle}
Soit $E$ une courbe elliptique sur $\QQ$ de niveau $N$,
$f$ la forme modulaire parabolique normalisée de poids 2 pour $\Gamma_0(N)$
qui lui est associée et $\pi : X_0(N) \to E$ une paramétrisation modulaire de
$E$.

Soit $V_f$ le $\QQ$-espace vectoriel de $W = \Hom_{\Gamma_0(N)}(\Delta_0, \QQ)$
associé obtenu par décomposition
par les opérateurs de Hecke .
Autrement dit, $V_f$ est l'ensemble des éléments $v$ de
$\Hom_{\Gamma_0(N)}(\Delta_0, \QQ)$
tel que $(T(n) - a_n(f))v=0$ pour tout entier $n\geq 1$.
Il est en fait contenu dans le sous-espace $W_{par}$ de $W$.

On a un diagramme commutatif
\begin{center}
\begin{tikzcd}
H^1(E,\QQ)\arrow{d}[swap]{\cong}\arrow{r}&H^1(X_0(N),\QQ)\arrow{d}[swap]{\cong}\\
\Hom_{\QQ}(H_1(E,\QQ),\QQ)\arrow{d}[swap]{\cong}\arrow{r} &\arrow{d}[swap]{\cong}\Hom_{\QQ}(H_1(X_0(N),\QQ), \QQ)&\\
V_f\arrow{r}& W_{par} \cong\Hom_{\Gamma_0(N)}(\Delta_0, \QQ)/\Hom_{\Gamma_0(N)}(\Delta, \QQ)
\end{tikzcd}
\end{center}

\subsection{\texorpdfstring{Structure entière $\cL_f$ associée à une forme modulaire propre}{Lg}}
Soit $\cL_f$ le $\ZZ$-sous-module de $V_f$ défini par
$\cL_f=V_f \cap \cW$. C'est donc l'ensemble des éléments $\Phi$ de $V_f$
tels que $\Phi\big((0,\gamma^{-1} 0)\big) \in \ZZ$ pour tout $\gamma \in \Gamma_0(N)$.
C'est aussi $H^1(X_0(N), \ZZ) \cap V_f$
(ici, on voit simplement $H^1(X_0(N), \ZZ)$ comme le dual sur $\ZZ$
de $H_1(X_0(N), \ZZ)$).

\subsection{\texorpdfstring{Quelques bases particulières de l'homologie de $E$}{Lg}}

Plusieurs bases de $H_1(E,\ZZ)$ sont construites. La première est définie
à l'aide des sous-espaces propres par la conjugaison complexe sur $E(\CC)$
dans un modèle minimal de Weierstrass,
la seconde est liée à la paramétrisation modulaire de $E$.

\subsubsection{Réseau associé à un modèle minimal}
Faisons quelques rappels sur les périodes complexes de $E$.
On note $c_\infty$ le nombre de composantes connexes de $E(\RR)$.
 Soit $\Lambda_E$ le réseau des périodes de $E$ dans un modèle minimal,
 autrement dit le sous-$\ZZ$-module
de rang 2 de $\CC$ formé des nombres $\int_{c} \omega_E$ pour
$c \in H_1(E,\ZZ)$. Soient $\Omega_E^+ \in \RR^+$ et
$\Omega_E^- \in i\RR^+$ tels que $\ZZ \Omega_E^+ = \Lambda_E \cap \RR$,
$\ZZ\Omega_E^- = \Lambda_E \cap i\RR$; on note $\Omega_1 = \Omega^+$ et
\footnote{La normalisation de $(\omega_1,\omega_2) = \texttt{E.omega}$
 dans Pari/GP change le signe de $\Omega_E^-$: on a $\omega_1 = \Omega_1$ et
 $$
 \omega_2 = \begin{cases}
 \frac{1}{2}(\Omega_E^+ - \Omega_E^-) = \Omega_1-\Omega_2&
   \text{si $c_\infty=1$},\\
 -\Omega_E^- = -\Omega_2&\text{si $c_\infty=2$}.
\end{cases}
 $$}
\begin{equation*}
\Omega_2 = \begin{cases}
\frac{1}{2}(\Omega_E^+ + \Omega_E^-) &\text{si $c_\infty=1$},\\
\Omega_E^- &\text{si $c_\infty=2$}.
\end{cases}
\end{equation*}
On a $\Lambda_E = \ZZ \Omega_1 + \ZZ \Omega_2$ et
$\Omega_E^+ \Omega_E^-= \frac{2}{c_\infty} \text{Aire}(\Lambda_E)$.
Notons $\delta_+$, $\delta_-$, $\delta_1$ et $\delta_2$
les éléments de $H_1(E,\ZZ)$ tels que
$$
\int_{\delta_+} \omega_E=\Omega_{E}^+,\quad
\int_{\delta_-} \omega_E=\Omega_{E}^-,\quad
\int_{\delta_1} \omega_E=\Omega_1,\quad\text{et}\quad
\int_{\delta_2} \omega_E=\Omega_2.
$$
Bien sûr, $\delta_1=\delta_+$ et
\begin{equation*}
\frac{2}{c_\infty}\delta_2=\begin{cases}
\delta_- + \delta_+&\text{si $c_\infty=1$},\\
\delta_-&\text{si $c_\infty=2$}.\\
\end{cases}
\end{equation*}
On note $(\delta_+^*, \delta_-^*)$ (resp. $(\delta_1^*, \delta_2^*)$)
la base duale de $(\delta_+, \delta_-)$ (resp.
$(\delta_1, \delta_2)$) dans $H^1(E,\ZZ)=\Hom_{\ZZ}(H_1(E,\ZZ), \ZZ)$:
$(\delta_1,\delta_2)$ est une base de $H_1(E,\ZZ)$
et $(\delta_1^*,\delta_2^*)$ est une base de $H^1(E,\ZZ)$. Explicitement,
on a
\begin{equation*}
 \delta_2^*=\frac{2}{c_\infty}\delta^*_-
 \quad\text{et}\quad
 \delta_1^* = \begin{cases}
   \delta^*_+ - \delta^*_- &\text{si $c_\infty=1$},\\
   \delta^*_+ &\text{si $c_\infty=2$}.
 \end{cases}
\end{equation*}
On en déduit l'action de la conjugaison complexe $\sigma$ sur la base
$(\delta_1^*,\delta_2^*)$
\begin{equation*}
\sigma \delta_2^*= -\delta_2^*
\quad\text{et}\quad
\begin{cases}
  \sigma \delta_1^*= \delta_1^* + \delta_2^* &\text{si $c_\infty=1$}\\
  \sigma \delta_1^*= \delta_1^* &\text{si $c_\infty=2$}.
\end{cases}
\end{equation*}

\subsubsection{Réseau associé à la paramétrisation modulaire}\label{param}
Fixons une paramétrisation modulaire minimale $\pi: X_0(N) \to E$.
On note $(n_{1,E}, n_{2,E})$ les diviseurs élémentaires de l'image $\pi_*H_1(X_0(N),
\ZZ)$ de $H_1(X_0(N), \ZZ)$ dans $H_1(E,\ZZ)$ :
il existe une base $(\delta_1, \delta_2)$ de $H_1(E,\ZZ)$
et une base $(\pi_*(\beta_1), \pi_*(\beta_2))$ de $\pi_*H_1(X_0(N), \ZZ)$
telles que
$$
\begin{cases} \pi_*(\beta_1) &= n_{1,E} \cdot \delta_1,\\
\pi_*(\beta_2)&=n_{2,E} \cdot \delta_2,
\end{cases}
$$
et les deux entiers positifs $n_{1,E}$ et $n_{2,E}$, $n_{2,E} \mid n_{1,E}$,
sont uniques.

\subsection{\texorpdfstring{Structure entière $\cL_E$ associée à une courbe elliptique}{Lg}}
Soit une base $\cB=(\delta_1,\delta_2)$ de $H_1(E, \ZZ)$.
On note $(\delta_1^*$, $\delta_2^*)$ la base de
$H^1(E, \ZZ)$ duale : on a donc pour $\delta \in H_1(E,\QQ)$
$$ \delta= \delta_1^*(\delta) \cdot \delta_1
         + \delta_2^*(\delta) \cdot \delta_2$$
et si $\omega \in H^0(E,\Omega^1_E)$
$$
\int_{\delta} \omega =
  \delta^*_1(\delta) \int_{\delta_1}\omega
+ \delta^*_2(\delta) \int_{\delta_2}\omega\,.
$$
En particulier, si $\omega_E$ est une forme différentielle de Néron
de $E$, on a
$$
\int_{\delta} \omega_E = \delta^*_1(\delta) \cdot \Omega_{E,1}
 + \delta^*_2(\delta) \cdot \Omega_{E,2}
$$
avec $\Omega_{E,i}=\int_{\delta_i} \omega_E$.
Soient $\Phi_{E,1}^{\cB}$, $\Phi_{E,2}^{\cB}$ les éléments de $V_f$ définis
pour $\beta \in H_1(X_0(N),\ZZ)$ par
\begin{equation}
\label{def_b}
\int_{\beta} \omega_f =
 \Phi_{E,1}^{\cB}(\beta) \int_{\delta_1} \omega_E
+\Phi_{E,2}^{\cB}(\beta) \int_{\delta_2} \omega_E
\end{equation}
 (seule la $f$-composante intervient).

\begin{defn}
Le sous-$\ZZ$-module de $V_f$ engendré par $\Phi_{E,1}^\cB$ et $\Phi_{E,2}^\cB$
ne dépend pas de la base choisie $\cB$. On le note $\cL_E$.
\end{defn}
\begin{lem}
Les diviseurs élémentaires (au sens généralisé) de
$\cL_E$ dans $\cL_f$ sont $(\frac{n_{1,E}}{c_E},\frac{n_{2,E}}{c_E})$.
\end{lem}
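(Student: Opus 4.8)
The plan is to make both lattices explicit inside the two-dimensional space $V_f$ and then read off their relative position. Write $L = H_1(X_0(N),\ZZ)$, $M = H_1(E,\ZZ)$, and let $\pi_* : L \to M$ be the induced map on homology, so that $\pi_* L \subset M$ is the finite-index sublattice with elementary divisors $(n_{1,E}, n_{2,E})$. I would fix the basis $\cB = (\delta_1,\delta_2)$ of $M$ adapted to these divisors, with dual basis $(\delta_1^*,\delta_2^*)$ of $H^1(E,\ZZ) = M^\vee$, and write $\pi^* : M^\vee \to L^\vee$ for the transpose of $\pi_*$.

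First I would identify $\cL_E$. Since $\pi$ is minimal, $\pi^*\omega_E = c_E\,\omega_f$, so for every $\beta \in L$ one has $\int_\beta \omega_f = \tfrac1{c_E}\int_{\pi_*\beta}\omega_E = \tfrac1{c_E}\big(\delta_1^*(\pi_*\beta)\,\Omega_{E,1} + \delta_2^*(\pi_*\beta)\,\Omega_{E,2}\big)$. Because $\Omega_{E,1}$ and $\Omega_{E,2}$ span a lattice in $\CC$ they are $\QQ$-linearly independent, so comparing with the defining relation \eqref{def_b} and using uniqueness of rational coordinates gives $\Phi_{E,i}^{\cB} = \tfrac1{c_E}\,\pi^*\delta_i^*$ (the element $\pi^*\delta_i^*$ of $L^\vee$ sends $\beta$ to $\delta_i^*(\pi_*\beta)$). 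Hence $\cL_E = \tfrac1{c_E}\,\pi^*\big(H^1(E,\ZZ)\big)$, which is visibly independent of $\cB$, as the definition asserts.

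Next I would identify $\cL_f = V_f \cap H^1(X_0(N),\ZZ)$ with the dual of $\pi_* L$. Rationally $V_f$ is the image of $\pi^* : H^1(E,\QQ) \to H^1(X_0(N),\QQ)$ by the commutative diagram of the subsection on the rational structure, hence $V_f$ is exactly the space of functionals on $L\otimes\QQ$ vanishing on $(\ker\pi_*)\otimes\QQ$. Since $M$ is torsion-free, $\ker\pi_*$ is saturated in $L$, so $\cL_f = V_f \cap L^\vee$ is the group of all $\phi \in L^\vee$ vanishing on $\ker\pi_*$; descent along $L \twoheadrightarrow L/\ker\pi_* \cong \pi_* L$ identifies this group canonically with $(\pi_* L)^\vee$. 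Under this identification $\pi^* M^\vee$ corresponds to the image of the restriction map $M^\vee \to (\pi_* L)^\vee$, i.e. to the transpose of the inclusion $\pi_* L \hookrightarrow M$.

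It then remains to compute elementary divisors. The transpose of an inclusion of lattices has the same elementary divisors as the inclusion: in the adapted basis, $(n_{1,E}\delta_1, n_{2,E}\delta_2)$ is a basis of $\pi_* L$, hence $(\delta_1^*,\delta_2^*)$ is a basis of $(\pi_* L)^\vee \cong \cL_f$ for which $(n_{1,E}\delta_1^*, n_{2,E}\delta_2^*)$ is a basis of $\pi^* M^\vee$. Thus $\pi^* H^1(E,\ZZ)$ has elementary divisors $(n_{1,E}, n_{2,E})$ in $\cL_f$, and multiplying by $1/c_E$ scales each generalized elementary divisor by $1/c_E$, giving $\big(\tfrac{n_{1,E}}{c_E}, \tfrac{n_{2,E}}{c_E}\big)$. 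The one delicate point is the third paragraph: one must know that $V_f$ is \emph{exactly} the pullback of $H^1(E,\QQ)$, so that the integral intersection $V_f \cap L^\vee$ is the full dual $(\pi_* L)^\vee$ and not a proper sublattice of it; this is precisely where the Hecke-eigenspace description of $V_f$ together with the commutative diagram are needed.
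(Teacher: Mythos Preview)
Your argument is correct and is essentially the paper's own: both compute $\Phi_{E,i}^{\cB}=\tfrac1{c_E}\,\pi^*\delta_i^*$ from $\pi^*\omega_E=c_E\omega_f$ and rest on identifying $\cL_f$ with the $\ZZ$-dual of $\pi_*H_1(X_0(N),\ZZ)$, your third paragraph spelling out carefully what the paper's ``par d\'efinition'' leaves tacit. One slip in the last paragraph: the bases are swapped --- the dual basis of $(n_{1,E}\delta_1,n_{2,E}\delta_2)$ in $(\pi_*L)^\vee\cong\cL_f$ is $\big(\tfrac1{n_{1,E}}\delta_1^*,\tfrac1{n_{2,E}}\delta_2^*\big)$, while $(\delta_1^*,\delta_2^*)$ (restricted) spans the \emph{sublattice} $\pi^*M^\vee$ --- but the elementary divisors and the conclusion are unaffected.
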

\begin{proof}
Si $n\in \ZZ$, $\beta \in H_1(X_0(N),\ZZ)$ et $\delta \in H_1(E,\ZZ)$ sont tels que
$n \cdot \delta = \pi_*(\beta)$, on a
$$n\int_{\delta} \omega_E= \int_{\pi_*\beta}\omega_E=
\int_{\beta} \pi^*\omega_E
=c_E\int_{\beta}\omega_f
$$
puisque $\pi^* \omega_E= c_E \omega_f$.
Lorsque $\cB = (\delta_1, \delta_2)$ est une base de $H_1(E,\ZZ)$ adaptée aux
diviseurs élémentaires de l'image de $H_1(X_0(N),\ZZ)$ dans $H_1(E,\ZZ)$ comme
en \ref{param}, en appliquant cela à $\beta_1$ et $\beta_2$, l'équation \eqref{def_b} devient pour tout $\beta \in H_1(X_0(N),\ZZ)$
\begin{equation*}
\begin{split}
\int_{\beta} \omega_f &=
\frac{c_E}{n_{1,E}}\Phi_{E,1}^{\cB}(\beta) \int_{\beta_1}
\omega_f + \frac{c_E}{n_{2,E}}\Phi_{E,2}^{\cB}(\beta) \int_{\beta_2}\omega_f.
\end{split}
\end{equation*}
Donc
$
\frac{c_E}{n_{1,E}}\Phi_{E,1}^{\cB},
\frac{c_E}{n_{2,E}}\Phi_{E,2}^{\cB}
$
est par définition une base de $\cL_f$.
On en déduit le lemme.
\end{proof}
Remarquons que $c_E \cL_E$ est contenu dans $\cL_f$, mais que
$\cL_E$ ne l'est pas forcément (la constante de Manin d'une courbe de Weil
forte est dans $\ZZ$ et conjecturée égale à 1,
mais pas celle d'une courbe elliptique quelconque).

\subsection{\texorpdfstring{Calcul de $\cL_E$}{Lg}}
Le calcul de $\cL_E$ est fait
à partir de bases $\delta_\pm$ des espaces propres $H^1(E,\ZZ)^{\pm}$
pour la conjugaison complexe sur $E(\CC)$.
Rappelons que si $\psi$ est un caractère de Dirichlet de conducteur
$D$ premier à $N$,
$$\tau(\psi)L(E, \overline{\psi},1)=
\sum_{a \bmod D}\psi(a)\int_{\frac{a}{D}}^\infty 2i\pi f(z) dz$$
où
$\tau(\psi)= \sum_{a \bmod D} \psi(a) e^{2 i \pi a/D}$
est la somme de Gauss de $\psi$.
Choisissons deux caractères de Dirichlet $\psi_0$ et $\psi_1$
de conducteurs respectifs $D_0$ et $D_1$, de signe
$\psi_j(-1) = \epsilon_j=(-1)^j$ et tels que $L(E,\overline{\psi_j},1)$ soit non nul pour
$j=0, 1$ (on les choisira de conducteur minimal pour cette propriété).

Les conditions
$$\tau(\psi_j)L(E, \overline{\psi}_j, 1)=
\sum_{a \bmod D_j} \psi_j(a) \cdot \Phi_{E,\epsilon_j}^{(\delta_+, \delta_-)}
\big((a/D_j, \infty)\big)
\int_{\delta_{\epsilon_j}}\omega_E$$
pour $j=0,1$ déterminent les symboles modulaires $\Phi_{E,\pm}^{(\delta_+, \delta_-)}$
dans le $\QQ$-espace vectoriel $V_f^{\pm}$ de dimension 1 de manière indépendante du
choix des $\psi_j$ :
cela revient en effet à imposer les conditions
$$
\int_{\beta_0} \omega_f =
 \Phi_{E,+}^{(\delta_+, \delta_-)}(\beta_0) \int_{\delta_+} \omega_E,
$$
$$
\int_{\beta_1} \omega_f =
\Phi_{E,-}^{(\delta_+, \delta_-)}(\beta_1) \int_{\delta_-} \omega_E,
$$
avec $\beta_j=\sum_{a \bmod D_j} \psi_j(a) \big((\frac{a}{D_j},\infty)\big)$.
Par exemple, pour $\Psi_0$ trivial et $\beta_0=(0, \infty)$,
la formule devient
$$
\frac{L(E,1)}{\Omega_{E}^+} = \Phi_{E,+}^{(\delta_+,
\delta_-)}\big((0,\infty)\big).
$$
Le réseau $\cL_E$ de $V_f$ associé à $E$ est alors engendré par
\begin{equation*}
\begin{cases}
\Phi_{E,+}^{(\delta_+, \delta_-)} \quad\text{et}\quad
\Phi_{E,-}^{(\delta_+, \delta_-)}& \text{si $c_\infty=2$}\\
\Phi_{E,+}^{(\delta_+, \delta_-)}-\Phi_{E,-}^{(\delta_+, \delta_-)}
\quad\text{et}\quad
2\Phi_{E,-}^{(\delta_+, \delta_-)}&\text{si $c_\infty=1$}
\end{cases}
\end{equation*}
\begin{rem}
Il est avantageux pour les calculs d'utiliser des caractères de Dirichlet
de conducteur le plus petit possible. Il parait naturel d'utiliser un
caractère quadratique mais ce n'est pas toujours possible ni optimal. Par exemple,
soit la courbe $E=1225e1$ de conducteur $N=1225=35^2$. Le signe de l'équation
fonctionnelle de $E$ est $1$. Montrons en utilisant Pari/GP que le signe de
l'équation fonctionnelle des twists de $E$ par un caractère quadratique impair
$\psi$ est $-1$, ce qui impliquera que $L(E^{(\psi)},1)=0$ pour tout $D$
négatif. Pour $D$ discriminant fondamental, on note $E^{(D)}$ le twist de
$E$ par le caractère quadratique $(D/.)$, $w(E)$ le signe de l'équation
fonctionnelle de $E$. On a la formule bien connue $w(E^{(D)})=\psi(-N)w(E)$
pour $D$ premier à $N$, donc ici $w(E^{(D)}) =\psi(-1)w(E)
=\text{signe}(D)w(E)$ pour $(D,35) = 1$.
\begin{verbatim}
? E = ellinit("1225e1");
? ellrootno(E)
%2 = 1

? E7 = ellinit(elltwist(E,-7)); ellidentify(E7)[1][1]
%3 = "1225g1"
? ellrootno(E7)
%4 = -1

? E5 = ellinit(elltwist(E,5)); ellidentify(E5)[1][1]
%5 = "1225f1"
? ellrootno(E5)
%6 = 1

? E35 = ellinit(elltwist(E,-35)); ellidentify(E35)[1][1]
%7 = "1225h1"
? ellrootno(E35)
%8 = -1
\end{verbatim}
Ainsi, les twists de $E$ par $-7$, $5$, $-35$ sont encore de conducteur un
carré.
\begin{itemize}
\item Si $D=-7m$ avec $m$ premier à $35$ et positif,
$$w(E^{(D)}) = w({E^{(-7)}}^{(m)}) = \text{signe}(m) w(E^{(-7)})
  = w(E^{(-7)}) = -1.$$
\item Si $D=5m$ avec $m$ premier à $35$ et négatif,
$$w(E^{(D)}) = w({E^{(5)}}^{(m)}) = \text{signe}(m) w(E^{(5)})
  = -w(E^{(5)}) = -1.$$
\item Si $D=35m$ avec $m$ premier à $35$ et positif,
$$w(E^{(D)}) = w({E^{(-35)}}^{(m)}) = \text{signe}(m) w(E^{(-35)})
  = w(E^{(-35)}) = -1.$$
\end{itemize}
Autrement dit, $L(E,\psi, 1)$
est nul pour tous les caractères quadratiques impairs.
On utilise alors pour le calcul de $\Phi_{E,-}^{(\delta_+, \delta_-)}$
un caractère de Dirichlet impair de conducteur 9 et d'ordre 6.
\end{rem}

\section{Conséquences}
\subsection{Critère de calcul de la courbe de Weil forte et de sa constante de Manin}
\begin{prop}
\begin{enumerate}
\item
La courbe $E$ est une courbe de Weil forte si et seulement si
les sous-$\ZZ$-modules $\cL_f$ et $\cL_E$ de $V_f \subset W$ sont homothétiques.
\item Si $\cL_E=\cL_f$, la courbe $E$ est la courbe de Weil forte
et la constante de Manin est égale à 1.
\end{enumerate}
\end{prop}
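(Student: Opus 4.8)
The plan is to rephrase each of the three conditions purely in terms of the two integers $n_{1,E},n_{2,E}$ of \S\ref{param} and the Manin constant $c_E$, and then compare. First, by the lemma of Section~1 (lemme 3, \S4 de \cite{Mazur72}) applied to the minimal parametrization $\pi$, the curve $E$ is a strong Weil curve if and only if $\pi_*\colon H_1(X_0(N),\ZZ)\to H_1(E,\ZZ)$ is surjective, which by the definition of the elementary divisors of $\pi_*H_1(X_0(N),\ZZ)$ amounts to $n_{1,E}=n_{2,E}=1$. Next, the preceding lemma computes the generalized elementary divisors of $\cL_E$ in $\cL_f$ to be $(n_{1,E}/c_E,\,n_{2,E}/c_E)$. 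Since $\cL_E$ and $\cL_f$ are full-rank lattices in the $2$-dimensional space $V_f$, and two such lattices are homothetic exactly when their two generalized elementary divisors coincide, I record that $\cL_E$ and $\cL_f$ are homothetic if and only if $n_{1,E}=n_{2,E}$.

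The substantive point --- and the step I expect to be the main obstacle --- is to upgrade the equality $n_{1,E}=n_{2,E}$ to the strong Weil property. Here I would exploit the universal parametrization $\pi_1\colon X_0(N)\to E_1$ of Section~1. Factoring $\pi=\psi\circ\pi_1$ with $\psi\colon E_1\to E$ an isogeny (it is nonconstant and sends the origin to the origin), and using that $\pi_{1,*}$ is surjective because $E_1$ is strong Weil, one obtains $\pi_*H_1(X_0(N),\ZZ)=\psi_*H_1(E_1,\ZZ)$; hence $(n_{1,E},n_{2,E})$ are the elementary divisors of $\psi_*H_1(E_1,\ZZ)$ inside $H_1(E,\ZZ)$. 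If they are equal, say $n_{1,E}=n_{2,E}=m$, then $\psi_*H_1(E_1,\ZZ)=m\,H_1(E,\ZZ)$. Writing $E_1=\CC/\Lambda_1$, $E=\CC/\Lambda$ and $\psi$ as multiplication by some $\alpha$, this says $\alpha\Lambda_1=m\Lambda$, so that $\ker\psi=E_1[m]=\ker[m]$. As $\psi$ and $[m]$ then share the same kernel, $\psi=[m]\circ u$ for an isomorphism $u\colon E_1\to E$ defined over $\QQ$; in particular $E\cong E_1$, so $E$ is a strong Weil curve.

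Assembling these observations proves (1). If $\cL_E$ and $\cL_f$ are homothetic, then $n_{1,E}=n_{2,E}$, and the previous paragraph makes $E$ a strong Weil curve; conversely, if $E$ is strong Weil then $n_{1,E}=n_{2,E}=1$ and the generalized elementary divisors $(1/c_E,1/c_E)$ are equal, so the lattices are homothetic. For (2), the equality $\cL_E=\cL_f$ is in particular a homothety, so (1) shows $E$ is strong Weil and $n_{1,E}=n_{2,E}=1$; then the generalized elementary divisors $(n_{1,E}/c_E,n_{2,E}/c_E)=(1/c_E,1/c_E)$ must equal $(1,1)$, forcing $c_E=1$. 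As a consistency check, the factorization $\pi=[m]\circ u\circ\pi_1$ gives $\deg\pi=m^2\deg\pi_1$, so the minimality of $\pi$ already forces $m=1$ directly.
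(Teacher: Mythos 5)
Your proof is correct, and while it shares the paper's overall skeleton, the crucial converse implication in (1) is handled by a genuinely different argument. Like the paper, you use Mazur's lemma to translate \emph{courbe de Weil forte} into surjectivity of $\pi_*$, i.e. $n_{1,E}=n_{2,E}=1$, and the elementary-divisor lemma to translate homothety of $\cL_E$ and $\cL_f$ into $n_{1,E}=n_{2,E}$; part (2) is then settled the same way in both texts. But for the converse of (1), the paper introduces the strong Weil curve $E_1$, chooses an isogeny $\alpha\colon E\to E_1$ with cyclic kernel, asserts (without proof) that the elementary divisors of $\cL_{E_1}$ in $\cL_E$ are $(\deg\alpha,1)$, and concludes that homothety of $\cL_E$ with $\cL_f$, hence with $\cL_{E_1}$, forces $\deg\alpha=1$. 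You instead stay on the homology side: via the universal factorization $\pi=\psi\circ\pi_1$ and the surjectivity of $\pi_{1,*}$, the condition $n_{1,E}=n_{2,E}=m$ becomes $\psi_*H_1(E_1,\ZZ)=m\,H_1(E,\ZZ)$, and the complex-torus computation $\ker\psi=E_1[m]$ exhibits $\psi$ as $[m]$ composed with an isomorphism $u\colon E_1\to E$. What your route buys is self-containedness: it replaces the paper's unproved claim about how the lattices of two isogenous curves sit inside $V_f$ by the standard fact that two isogenies with the same kernel differ by an isomorphism (the descent of $u$ to $\QQ$, which you assert, deserves the one-line justification that $u$ is unique and hence Galois-equivariant); your closing remark also recovers $m=1$ directly from minimality of $\pi$, which the paper's argument does not make explicit. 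What the paper's route buys is brevity and the fact that it never leaves the modular-symbol lattices, so no uniformization $E=\CC/\Lambda$ is needed.
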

\begin{proof}
La courbe de Weil forte d'une classe d'isogénie sur $\QQ$
de courbes elliptiques est la courbe elliptique
telle que $\pi_*: H_1(X_0(N),\ZZ) \to H_1(E,\ZZ)$ est surjective.
Avec les notations précédentes, cela est le cas si et seulement
$n_{1,E}=n_{2,E}=1$.
Les deux sous-$\ZZ$-modules $\cL_E$ et $\cL_f$ de $V_f$ sont alors homothétiques.
Réciproquement, notons $E_1$ la courbe de Weil forte
(dont on sait qu'elle existe et vérifie que $\cL_{E_1}$ est homothétique à $\cL_f$)
et $\alpha$ une isogénie de $E$ sur $E_1$
de noyau cyclique. Les diviseurs élémentaires
de $\cL_{E_1}$ dans $\cL_{E}$ sont alors $\deg(\alpha)$ et $1$.
Si l'on suppose ces deux réseaux homothétiques, $\alpha$ est un isomorphisme.
Cela démontre la première partie.

Supposons maintenant que $\cL_E=\cL_f$. On a alors $n_{1,E}=n_{2,E}=c_E$.
Mais il n'y a qu'une seule courbe de Weil forte à isomorphisme près.
Donc, on a nécessairement $n_{1,E}=n_{2,E}=c_E=1$.
\end{proof}

\subsection{Graphe orienté des isogénies admissibles}
Prenons une classe d'isogénie de courbes elliptiques sur $\QQ$.
Le graphe des isogénies de degré premier peut être orienté de la manière suivante.
Si $\pi:E \to E'$ est une isogénie de degré premier
et $\pi'$ l'isogénie duale, on a
 $\pi^*\omega_{E'}=\pm \omega_{E}$ ou ${\pi'}^*\omega_{E}=\pm \omega_{E'}$.
L'arc entre $E$ et $E'$ est alors orienté comme $E\to E'$
si et seulement si $\pi^*\omega_{E'}=\pm \omega_{E}$
(appelé \emph{admissible isogeny} dans \cite{manin72}, 6.11).
Le calcul de ce graphe peut se faire par un calcul des modèles minimaux
des courbes elliptiques de la classe d'isogénie et des réseaux des périodes complexes
associés.

\subsection{Degré modulaire et constante de Manin} Soit $E_1$ une courbe de Weil forte et
$E$ une courbe elliptique sur $\QQ$ dans sa classe d'isogénie.
Une paramétrisation modulaire minimale $X_0(N) \to E$
est obtenue comme composé $X_0(N) \to E_1 \to E$
où $E_1\to E$ est de degré minimal.
Le degré modulaire de $E$ est égal au produit du
degré modulaire de $E_1$ par le degré de cette isogénie.
D'autre part, le degré modulaire de la courbe de Weil forte
$E_1$ se calcule par la formule
\footnote{
Si l'on désire rester dans le cadre algébrique,
ce degré peut aussi être calculé à partir de la matrice de Gram
sur une base de $\cL_f$ du produit de Petersson algébrique
(ou produit d'intersection) défini sur l'espace
$\Hom_{\Gamma_0(N)}(\Delta_0, \Q)$ (voir article à venir).
}
\begin{equation}
\label{degmod}
4 \pi^2 ||f||^2=\deg(\pi_1) \text{Aire}(E_1)
=\frac{c_\infty}{2} \deg(\pi_1) \Omega_{E_1}^+\Omega_{E_1}^-
\end{equation}
une fois qu'on a vérifié que la constante de Manin de $E_1$ est bien 1.
On obtient l'algorithme suivant.
\begin{algorithm}[H]
\caption{Degré modulaire et constante de Manin}
\label{algo:moddegree}
\begin{algorithmic}[1]
\REQUIRE Une courbe elliptique $E$.
\ENSURE Le degré modulaire $\deg(E)$ et la constante de Manin $c_E$.
\STATE On calcule la courbe de Weil forte $E_1$ dans la classe d'isogénie de $E$
et on vérifie en même temps que $c_{E_1}=1$. On calcule son degré modulaire
$d_1$ (par exemple par \texttt{ellmoddegree} dans Pari/GP).
\STATE On calcule le graphe orienté des isogénies de degré premier de la classe d'isogénie.
Si $a$ est un arc, on note $\deg(a)$ le degré (premier) de
l'isogénie.
\STATE On cherche le chemin le plus court de $E_1$ vers $E$
dans le graphe non orienté associé.
\STATE $d \leftarrow d_1$; $c \leftarrow 1$;
\FOR{$a$ arc du chemin}
  \STATE $d \leftarrow d \times \deg(a)$
  \IF{$a$ est orienté négativement}
    \STATE $c \leftarrow c \times \deg(a)$
  \ENDIF
\ENDFOR
\RETURN $d$ et $c$.
\end{algorithmic}
\end{algorithm}

Le graphe orienté calculé a une source (courbe optimale de Stevens, conjecturalement
courbe de Weil forte relativement à $X_1(N)$, \cite{stevensopt}).
Si la courbe de Weil forte est la source du graphe orienté que l'on a construit,
la constante de Manin pour une courbe elliptique quelconque isogène
est toujours $1$.

Prenons le cas où $N=11$.
La courbe de Weil forte est "11a1". Le graphe orienté des isogénies de degré
premier est
$$11a3 \overset{5}{\to} 11a1 \overset{5}{\to} 11a2$$
La courbe optimale au sens de Stevens est "11a3". On a
\begin{verbatim}
? ellmoddegree(ellinit("11a1"))[1]
%1 = 1
\end{verbatim}

\begin{center}\begin{tabular}{|c|c|c|c|c|c|}
   \hline&deg&$c_E$&$[L_f:\cL_E]$&$[H_1(E,\ZZ):\pi^*H_1(X_0(N),\ZZ)]$&Matrice\\\hline11a1&1&1&1,1&1,1&\multirow{3}{*}{$\begin{pmatrix}0&5&0\\0&0&0\\5&0&0\end{pmatrix}$}\\*
11a2&5&1&5,1&5,1&\\*
11a3&5&5&1,1/5&5,1&\\*
\hline\end{tabular}\\
\begin{equation*}
\begin{tikzcd}
11a3 \arrow{r}{5} & \color{red}{11a1} \arrow{r}{5} & 11a2
\end{tikzcd}
\end{equation*}
\end{center}

\newpage
\appendix
\section{Programmes}
Ces programmes sont écrits en PARI/GP \cite {pari}.
\lstset{basicstyle=\ttfamily, morekeywords={my,for,if,return,next}}
\lstinputlisting{maninellmodular.gp}
\newpage
\section{Quelques exemples}
Nous avons fait une sélection de quelques exemples.
Ainsi, nous donnons des exemples de courbes elliptiques ayant
beaucoup de courbes isogènes et pour chaque valeur possible de degré d'isogénie
($2, 3, 5, 7, 13, 1, 17, 19, 37, 43, 67,163$, voir \cite{mazur78}),
La construction des graphes a été inspirée des programmes de F. Brunault
(\cite{brunault}).
Nous renvoyons à \cite{stein-watkins}, \S 4, pour une analyse
de familles de courbes où la courbe optimale pour $X_0(N)$ n'est pas
la source de l'arbre.
\smallskip

Le dernier exemple en niveau 130050 est un cas laissé ouvert par Cremona
\cite{cremonagit}. Il faut environ 26h de calcul pour confirmer que la
courbe optimale est bien \texttt{130050em1}. Comme l'indique Cremona,
une classe d'isogénie fixée dans l'intervalle couvert par ses tables
(conducteur inférieur à 400000) peut être raisonablement certifiée en
quelques jours par le calcul complet de l'espace des symboles modulaires
comme nous l'avons fait ci-dessus. Quand le niveau $N$ augmente, $100\%$ du
temps de calcul dans les programmes de l'annexe A est utilisé par la commande
\texttt{msfromell}, qui calcule l'espace des symboles modulaires de poids $2$
et niveau $N$ puis le symbole normalisé attaché à la classe d'isogénie
considéré. La complexité asymptotique de cette implantation est
$\tilde{O}(N^{\log_2 7})$.

\begin{center}\begin{tabular}{|c|c|c|c|c|c|}
   \hline&deg&$c_E$&$[L_f:\cL_E]$&$[H_1(E,\ZZ):\pi^*H_1(X_0(N),\ZZ)]$&Matrice\\\hline11a1&1&1&1,1&1,1&\multirow{3}{*}{$\begin{pmatrix}0&5&0\\0&0&0\\5&0&0\end{pmatrix}$}\\*
11a2&5&1&5,1&5,1&\\*
11a3&5&5&1,1/5&5,1&\\*
\hline\end{tabular}\\
\begin{equation*}
\begin{tikzcd}
11a3 \arrow{r}{5} & \color{red}{11a1} \arrow{r}{5} & 11a2
\end{tikzcd}
\end{equation*}
\end{center}
\begin{center}\begin{tabular}{|c|c|c|c|c|c|}
   \hline&deg&$c_E$&$[L_f:\cL_E]$&$[H_1(E,\ZZ):\pi^*H_1(X_0(N),\ZZ)]$&Matrice\\\hline14a1&1&1&1,1&1,1&\multirow{6}{*}{$\begin{pmatrix}0&0&3&2&0&0\\3&0&0&0&2&0\\0&0&0&0&0&2\\0&0&0&0&0&3\\0&0&0&3&0&0\\0&0&0&0&0&0\end{pmatrix}$}\\*
14a4&3&3&1,1/3&3,1&\\*
14a3&3&1&3,1&3,1&\\*
14a2&2&1&2,1&2,1&\\*
14a6&6&3&2,1/3&6,1&\\*
14a5&6&1&6,1&6,1&\\*
\hline\end{tabular}\\
\begin{equation*}
\begin{tikzcd}
14a4 \arrow{r}{3} \arrow{dr}{2} & \color{red}{14a1} \arrow{r}{3}
  \arrow{dr}{2} & 14a3 \arrow{dr}{2} \\
&14a6 \arrow{r}{3} & 14a2 \arrow{r}{3} & 14a5
\end{tikzcd}
\end{equation*}
\end{center}
\begin{center}\begin{tabular}{|c|c|c|c|c|c|}
   \hline&deg&$c_E$&$[L_f:\cL_E]$&$[H_1(E,\ZZ):\pi^*H_1(X_0(N),\ZZ)]$&Matrice\\\hline15a1&1&1&1,1&1,1&\multirow{8}{*}{$\begin{pmatrix}0&0&0&0&2&2&0&0\\2&0&2&0&0&0&0&0\\0&0&0&0&0&0&0&0\\0&2&0&0&0&0&0&0\\0&0&0&0&0&0&0&0\\0&0&0&0&0&0&2&2\\0&0&0&0&0&0&0&0\\0&0&0&0&0&0&0&0\end{pmatrix}$}\\*
15a3&2&2&1,1/2&2,1&\\*
15a7&4&2&2,1/2&4,1&\\*
15a8&4&4&1,1/4&4,1&\\*
15a4&2&1&2,1&2,1&\\*
15a2&2&1&2,1&2,1&\\*
15a6&4&1&4,1&4,1&\\*
15a5&4&1&4,1&4,1&\\*
\hline\end{tabular}\\
\begin{equation*}
\begin{tikzcd}
15a8 \arrow{r}{2}& 15a3 \arrow{r}{2}\arrow{rd}{2}& \color{red}{15a1}\arrow{rd}{2}\arrow{r}{2}& 15a2\arrow{rd}{2}\arrow{r}{2}& 15a6\\
&& 15a7&15a4&15a5\end{tikzcd}
\end{equation*}
\end{center}
\begin{center}\begin{tabular}{|c|c|c|c|c|c|}
   \hline&deg&$c_E$&$[L_f:\cL_E]$&$[H_1(E,\ZZ):\pi^*H_1(X_0(N),\ZZ)]$&Matrice\\\hline17a1&1&1&1,1&1,1&\multirow{4}{*}{$\begin{pmatrix}0&0&0&0\\2&0&0&2\\0&2&0&0\\0&0&0&0\end{pmatrix}$}\\*
17a2&2&2&1,1/2&2,1&\\*
17a4&4&4&1,1/4&4,1&\\*
17a3&4&2&2,1/2&4,1&\\*
\hline\end{tabular}\\
\begin{equation*}
\begin{tikzcd}
17a4\arrow{r}{2} & 17a2 \arrow{rd}{2}\arrow{r}{2} & \color{red}{17a1}\\
&&17a3\end{tikzcd}
\end{equation*}
\end{center}
\begin{center}\begin{tabular}{|c|c|c|c|c|c|}
   \hline&deg&$c_E$&$[L_f:\cL_E]$&$[H_1(E,\ZZ):\pi^*H_1(X_0(N),\ZZ)]$&Matrice\\\hline20a1&1&1&1,1&1,1&\multirow{4}{*}{$\begin{pmatrix}0&3&0&0\\0&0&0&0\\2&0&0&3\\0&2&0&0\end{pmatrix}$}\\*
20a3&3&1&3,1&3,1&\\*
20a2&2&2&1,1/2&2,1&\\*
20a4&6&2&3,1/2&6,1&\\*
\hline\end{tabular}\\
\begin{equation*}
\begin{tikzcd}[row sep=small]
&\color{red}{20a1}\arrow{rd}{3} \\
20a2\arrow{ru}{2}\arrow{rd}{3}&& 20a3 \\
&20a4\arrow{ru}{2} & \end{tikzcd}
\end{equation*}
\end{center}
\begin{center}\begin{tabular}{|c|c|c|c|c|c|}
   \hline&deg&$c_E$&$[L_f:\cL_E]$&$[H_1(E,\ZZ):\pi^*H_1(X_0(N),\ZZ)]$&Matrice\\\hline21a1&1&1&1,1&1,1&\multirow{6}{*}{$\begin{pmatrix}0&2&0&2&0&0\\0&0&0&0&0&0\\2&0&0&0&0&0\\0&0&0&0&2&2\\0&0&0&0&0&0\\0&0&0&0&0&0\end{pmatrix}$}\\*
21a3&2&1&2,1&2,1&\\*
21a4&2&2&1,1/2&2,1&\\*
21a2&2&1&2,1&2,1&\\*
21a6&4&1&4,1&4,1&\\*
21a5&4&1&4,1&4,1&\\*
\hline\end{tabular}\\
\begin{equation*}
\begin{tikzcd}21a4 \arrow{r}{2}  & \color{red}{21a1} \arrow{r}{2}
   \arrow{dr}{2}& 21a2\arrow{r}{2} \arrow{rd}{2}& 21a6\\
&& 21a3&21a5\end{tikzcd}
\end{equation*}
\end{center}
\begin{center}\begin{tabular}{|c|c|c|c|c|c|}
   \hline&deg&$c_E$&$[L_f:\cL_E]$&$[H_1(E,\ZZ):\pi^*H_1(X_0(N),\ZZ)]$&Matrice\\\hline26b1&2&1&1,1&1,1&\multirow{2}{*}{$\begin{pmatrix}0&7\\0&0\end{pmatrix}$}\\*
26b2&14&1&7,1&7,1&\\*
\hline\end{tabular}\\
\begin{equation*}
\begin{tikzcd}
\color{red}{26b1} \arrow{r}{7} & 26b2
\end{tikzcd}
\end{equation*}
\end{center}
\begin{center}\begin{tabular}{|c|c|c|c|c|c|}
   \hline&deg&$c_E$&$[L_f:\cL_E]$&$[H_1(E,\ZZ):\pi^*H_1(X_0(N),\ZZ)]$&Matrice\\\hline27a1&1&1&1,1&1,1&\multirow{4}{*}{$\begin{pmatrix}0&0&0&3\\3&0&3&0\\0&0&0&0\\0&0&0&0\end{pmatrix}$}\\*
27a3&3&3&1,1/3&3,1&\\*
27a4&9&3&3,1/3&9,1&\\*
27a2&3&1&3,1&3,1&\\*
\hline\end{tabular}\\
\begin{equation*}
\begin{tikzcd}27a3 \arrow{r}{3} \arrow{rd}{3} &\color{red}{27a1}\arrow{r}{3} & 27a2\\&27a4\end{tikzcd}\end{equation*}\end{center}
\begin{center}\begin{tabular}{|c|c|c|c|c|c|}
   \hline&deg&$c_E$&$[L_f:\cL_E]$&$[H_1(E,\ZZ):\pi^*H_1(X_0(N),\ZZ)]$&Matrice\\\hline30a1&2&1&1,1&1,1&\multirow{8}{*}{$\begin{pmatrix}0&2&0&0&3&0&0&0\\0&0&2&2&0&3&0&0\\0&0&0&0&0&0&3&0\\0&0&0&0&0&0&0&3\\0&0&0&0&0&2&0&0\\0&0&0&0&0&0&2&2\\0&0&0&0&0&0&0&0\\0&0&0&0&0&0&0&0\end{pmatrix}$}\\*
30a2&4&1&2,1&2,1&\\*
30a5&8&1&4,1&4,1&\\*
30a4&8&1&4,1&4,1&\\*
30a3&6&1&3,1&3,1&\\*
30a6&12&1&6,1&6,1&\\*
30a8&24&1&12,1&12,1&\\*
30a7&24&1&12,1&12,1&\\*
\hline\end{tabular}\\
\begin{equation*}
\begin{tikzcd}[row sep=large]
 && 30a5\arrow{r}{3}&30a8 \\
\color{red}{30a1} \arrow{r}{2}\arrow{rd}[left]{3}&30a2\arrow{rd}[left,near start]{2}\arrow{ru}{2}\arrow{r}{3}&30a6\arrow{rd}{2}\arrow{ru}[right]{2}\\
&30a3\arrow[crossing over]{ru}[below right, very near start]{2}&30a4\arrow{r}[below]{3}&30a7
\end{tikzcd}
\end{equation*}
\end{center}
\begin{center}\begin{tabular}{|c|c|c|c|c|c|}
   \hline&deg&$c_E$&$[L_f:\cL_E]$&$[H_1(E,\ZZ):\pi^*H_1(X_0(N),\ZZ)]$&Matrice\\\hline32a1&1&1&1,1&1,1&\multirow{4}{*}{$\begin{pmatrix}0&0&0&0\\2&0&2&2\\0&0&0&0\\0&0&0&0\end{pmatrix}$}\\*
32a2&2&2&1,1/2&2,1&\\*
32a4&4&2&2,1/2&4,1&\\*
32a3&4&2&2,1/2&4,1&\\*
\hline\end{tabular}\\
\begin{equation*}
\begin{tikzcd}
 & \color{red}{32a1} \\
32a2\arrow{ru}{2}\arrow{r}{2}\arrow{rd}{2} & 32a4\\
 & 32a3\\
\end{tikzcd}
\end{equation*}
\end{center}
\begin{center}\begin{tabular}{|c|c|c|c|c|c|}
   \hline&deg&$c_E$&$[L_f:\cL_E]$&$[H_1(E,\ZZ):\pi^*H_1(X_0(N),\ZZ)]$&Matrice\\\hline37b1&2&1&1,1&1,1&\multirow{3}{*}{$\begin{pmatrix}0&0&3\\3&0&0\\0&0&0\end{pmatrix}$}\\*
37b3&6&3&1,1/3&3,1&\\*
37b2&6&1&3,1&3,1&\\*
\hline\end{tabular}\\
\begin{equation*}
\begin{tikzcd}
37b3 \arrow{r}{3} & \color{red}{37b1} \arrow{r}{3} & 37b2
\end{tikzcd}
\end{equation*}
\end{center}
\begin{center}\begin{tabular}{|c|c|c|c|c|c|}
   \hline&deg&$c_E$&$[L_f:\cL_E]$&$[H_1(E,\ZZ):\pi^*H_1(X_0(N),\ZZ)]$&Matrice\\\hline38b1&2&1&1,1&1,1&\multirow{2}{*}{$\begin{pmatrix}0&5\\0&0\end{pmatrix}$}\\*
38b2&10&1&5,1&5,1&\\*
\hline\end{tabular}\\
\begin{equation*}
\begin{tikzcd}
\color{red}{38b1} \arrow{r}{5} & 38b2
\end{tikzcd}
\end{equation*}
\end{center}
\begin{center}\begin{tabular}{|c|c|c|c|c|c|}
   \hline&deg&$c_E$&$[L_f:\cL_E]$&$[H_1(E,\ZZ):\pi^*H_1(X_0(N),\ZZ)]$&Matrice\\\hline49a1&1&1&1,1&1,1&\multirow{4}{*}{$\begin{pmatrix}0&2&7&0\\0&0&0&7\\0&0&0&2\\0&0&0&0\end{pmatrix}$}\\*
49a2&2&1&2,1&2,1&\\*
49a3&7&1&7,1&7,1&\\*
49a4&14&1&14,1&14,1&\\*
\hline\end{tabular}\\
\begin{equation*}
\begin{tikzcd}[row sep=small]
&49a2\arrow{rd}{7} \\
\color{red}{49a1}\arrow{ru}{2}\arrow{rd}{7}&& 49a4 \\
&49a3\arrow{ru}{2} & \end{tikzcd}
\end{equation*}
\end{center}
\begin{center}\begin{tabular}{|c|c|c|c|c|c|}
   \hline&deg&$c_E$&$[L_f:\cL_E]$&$[H_1(E,\ZZ):\pi^*H_1(X_0(N),\ZZ)]$&Matrice\\\hline75b1&6&1&1,1&1,1&\multirow{8}{*}{$\begin{pmatrix}0&2&0&0&0&0&0&0\\0&0&2&2&0&0&0&0\\0&0&0&0&0&0&0&0\\0&0&0&0&2&2&0&0\\0&0&0&0&0&0&0&0\\0&0&0&0&0&0&2&2\\0&0&0&0&0&0&0&0\\0&0&0&0&0&0&0&0\end{pmatrix}$}\\*
75b2&12&1&2,1&2,1&\\*
75b4&24&1&4,1&4,1&\\*
75b3&24&1&4,1&4,1&\\*
75b6&48&1&8,1&8,1&\\*
75b5&48&1&8,1&8,1&\\*
75b8&96&1&16,1&16,1&\\*
75b7&96&1&16,1&16,1&\\*
\hline\end{tabular}\\
\begin{equation*}
\begin{tikzcd}
\color{red}{75b1} \arrow{r}{2}& 75b2 \arrow{r}{2}\arrow{rd}{2}& 75b3\arrow{rd}{2}\arrow{r}{2}& 75b5\arrow{rd}{2}\arrow{r}{2}& 75b8\\
&& 75b4&75b6&75b7\end{tikzcd}
\end{equation*}
\end{center}
\begin{center}\begin{tabular}{|c|c|c|c|c|c|}
   \hline&deg&$c_E$&$[L_f:\cL_E]$&$[H_1(E,\ZZ):\pi^*H_1(X_0(N),\ZZ)]$&Matrice\\\hline121a1&6&1&1,1&1,1&\multirow{2}{*}{$\begin{pmatrix}0&11\\0&0\end{pmatrix}$}\\*
121a2&66&1&11,1&11,1&\\*
\hline\end{tabular}\\
\begin{equation*}
\begin{tikzcd}
\color{red}{121a1} \arrow{r}{11} & 121a2
\end{tikzcd}
\end{equation*}
\end{center}
\begin{center}\begin{tabular}{|c|c|c|c|c|c|}
   \hline&deg&$c_E$&$[L_f:\cL_E]$&$[H_1(E,\ZZ):\pi^*H_1(X_0(N),\ZZ)]$&Matrice\\\hline195a1&24&1&1,1&1,1&\multirow{8}{*}{$\begin{pmatrix}0&2&0&0&0&0&0&0\\0&0&2&2&0&0&0&0\\0&0&0&0&0&0&0&0\\0&0&0&0&2&2&0&0\\0&0&0&0&0&0&0&0\\0&0&0&0&0&0&2&2\\0&0&0&0&0&0&0&0\\0&0&0&0&0&0&0&0\end{pmatrix}$}\\*
195a2&48&1&2,1&2,1&\\*
195a4&96&1&4,1&4,1&\\*
195a3&96&1&4,1&4,1&\\*
195a6&192&1&8,1&8,1&\\*
195a5&192&1&8,1&8,1&\\*
195a8&384&1&16,1&16,1&\\*
195a7&384&1&16,1&16,1&\\*
\hline\end{tabular}\\
\begin{equation*}
\begin{tikzcd}
\color{red}{195a1} \arrow{r}{2}& 195a2 \arrow{r}{2}\arrow{rd}{2}& 195a3\arrow{rd}{2}\arrow{r}{2}& 195a5\arrow{rd}{2}\arrow{r}{2}& 195a8\\
&& 195a4&195a6&195a7\end{tikzcd}
\end{equation*}
\end{center}
\begin{center}\begin{tabular}{|c|c|c|c|c|c|}
   \hline&deg&$c_E$&$[L_f:\cL_E]$&$[H_1(E,\ZZ):\pi^*H_1(X_0(N),\ZZ)]$&Matrice\\\hline208d1&48&1&1,1&1,1&\multirow{2}{*}{$\begin{pmatrix}0&7\\0&0\end{pmatrix}$}\\*
208d2&336&1&7,1&7,1&\\*
\hline\end{tabular}\\
\begin{equation*}
\begin{tikzcd}
\color{red}{208d1} \arrow{r}{7} & 208d2
\end{tikzcd}
\end{equation*}
\end{center}
\begin{center}\begin{tabular}{|c|c|c|c|c|c|}
   \hline&deg&$c_E$&$[L_f:\cL_E]$&$[H_1(E,\ZZ):\pi^*H_1(X_0(N),\ZZ)]$&Matrice\\\hline294a1&84&1&1,1&1,1&\multirow{2}{*}{$\begin{pmatrix}0&7\\0&0\end{pmatrix}$}\\*
294a2&588&1&7,1&7,1&\\*
\hline\end{tabular}\\
\begin{equation*}
\begin{tikzcd}
\color{red}{294a1} \arrow{r}{7} & 294a2
\end{tikzcd}
\end{equation*}
\end{center}
\begin{center}\begin{tabular}{|c|c|c|c|c|c|}
   \hline&deg&$c_E$&$[L_f:\cL_E]$&$[H_1(E,\ZZ):\pi^*H_1(X_0(N),\ZZ)]$&Matrice\\\hline361a1&20&1&1,1&1,1&\multirow{2}{*}{$\begin{pmatrix}0&19\\0&0\end{pmatrix}$}\\*
361a2&380&1&19,1&19,1&\\*
\hline\end{tabular}\\
\begin{equation*}
\begin{tikzcd}
\color{red}{361a1} \arrow{r}{19} & 361a2
\end{tikzcd}
\end{equation*}
\end{center}
\begin{center}\begin{tabular}{|c|c|c|c|c|c|}
   \hline&deg&$c_E$&$[L_f:\cL_E]$&$[H_1(E,\ZZ):\pi^*H_1(X_0(N),\ZZ)]$&Matrice\\\hline464e1&60&1&1,1&1,1&\multirow{2}{*}{$\begin{pmatrix}0&0\\2&0\end{pmatrix}$}\\*
464e2&120&2&1,1/2&2,1&\\*
\hline\end{tabular}\\
\begin{equation*}
\begin{tikzcd}
464e2 \arrow{r}{2} & \color{red}{464e1}
\end{tikzcd}
\end{equation*}
\end{center}
\begin{center}\begin{tabular}{|c|c|c|c|c|c|}
   \hline&deg&$c_E$&$[L_f:\cL_E]$&$[H_1(E,\ZZ):\pi^*H_1(X_0(N),\ZZ)]$&Matrice\\\hline507a1&312&1&1,1&1,1&\multirow{2}{*}{$\begin{pmatrix}0&7\\0&0\end{pmatrix}$}\\*
507a2&2184&1&7,1&7,1&\\*
\hline\end{tabular}\\
\begin{equation*}
\begin{tikzcd}
\color{red}{507a1} \arrow{r}{7} & 507a2
\end{tikzcd}
\end{equation*}
\end{center}
\begin{center}\begin{tabular}{|c|c|c|c|c|c|}
   \hline&deg&$c_E$&$[L_f:\cL_E]$&$[H_1(E,\ZZ):\pi^*H_1(X_0(N),\ZZ)]$&Matrice\\\hline585f1&192&1&1,1&1,1&\multirow{8}{*}{$\begin{pmatrix}0&2&0&0&0&0&0&0\\0&0&2&0&0&0&0&2\\0&0&0&2&0&0&2&0\\0&0&0&0&2&2&0&0\\0&0&0&0&0&0&0&0\\0&0&0&0&0&0&0&0\\0&0&0&0&0&0&0&0\\0&0&0&0&0&0&0&0\end{pmatrix}$}\\*
585f2&384&1&2,1&2,1&\\*
585f3&768&1&4,1&4,1&\\*
585f5&1536&1&8,1&8,1&\\*
585f7&3072&1&16,1&16,1&\\*
585f8&3072&1&16,1&16,1&\\*
585f6&1536&1&8,1&8,1&\\*
585f4&768&1&4,1&4,1&\\*
\hline\end{tabular}\\
\begin{equation*}
\begin{tikzcd}
\color{red}{585f1} \arrow{r}{2}& 585f2 \arrow{r}{2}\arrow{rd}{2}& 585f3\arrow{rd}{2}\arrow{r}{2}& 585f5\arrow{rd}{2}\arrow{r}{2}& 585f7\\
&& 585f4&585f6&585f8\end{tikzcd}
\end{equation*}
\end{center}
\begin{center}\begin{tabular}{|c|c|c|c|c|c|}
   \hline&deg&$c_E$&$[L_f:\cL_E]$&$[H_1(E,\ZZ):\pi^*H_1(X_0(N),\ZZ)]$&Matrice\\\hline681b1&375&1&1,1&1,1&\multirow{4}{*}{$\begin{pmatrix}0&2&2&0\\0&0&0&0\\0&0&0&0\\2&0&0&0\end{pmatrix}$}\\*
681b3&750&1&2,1&2,1&\\*
681b4&750&1&2,1&2,1&\\*
681b2&750&2&1,1/2&2,1&\\*
\hline\end{tabular}\\
\begin{equation*}
\begin{tikzcd}
681b2\arrow{r}{2} & \color{red}{681b1} \arrow{rd}{2}\arrow{r}{2} & 681b3\\
&&681b4\end{tikzcd}
\end{equation*}
\end{center}
\begin{center}\begin{tabular}{|c|c|c|c|c|c|}
   \hline&deg&$c_E$&$[L_f:\cL_E]$&$[H_1(E,\ZZ):\pi^*H_1(X_0(N),\ZZ)]$&Matrice\\\hline692a1&123&1&1,1&1,1&\multirow{2}{*}{$\begin{pmatrix}0&0\\2&0\end{pmatrix}$}\\*
692a2&246&2&1,1/2&2,1&\\*
\hline\end{tabular}\\
\begin{equation*}
\begin{tikzcd}
692a2 \arrow{r}{2} & \color{red}{692a1}
\end{tikzcd}
\end{equation*}
\end{center}
\begin{center}\begin{tabular}{|c|c|c|c|c|c|}
   \hline&deg&$c_E$&$[L_f:\cL_E]$&$[H_1(E,\ZZ):\pi^*H_1(X_0(N),\ZZ)]$&Matrice\\\hline848d1&84&1&1,1&1,1&\multirow{2}{*}{$\begin{pmatrix}0&0\\2&0\end{pmatrix}$}\\*
848d2&168&2&1,1/2&2,1&\\*
\hline\end{tabular}\\
\begin{equation*}
\begin{tikzcd}
848d2 \arrow{r}{2} & \color{red}{848d1}
\end{tikzcd}
\end{equation*}
\end{center}
\begin{center}\begin{tabular}{|c|c|c|c|c|c|}
   \hline&deg&$c_E$&$[L_f:\cL_E]$&$[H_1(E,\ZZ):\pi^*H_1(X_0(N),\ZZ)]$&Matrice\\\hline960e1&384&1&1,1&1,1&\multirow{8}{*}{$\begin{pmatrix}0&2&0&0&3&0&0&0\\0&0&2&2&0&3&0&0\\0&0&0&0&0&0&3&0\\0&0&0&0&0&0&0&3\\0&0&0&0&0&2&0&0\\0&0&0&0&0&0&2&2\\0&0&0&0&0&0&0&0\\0&0&0&0&0&0&0&0\end{pmatrix}$}\\*
960e2&768&1&2,1&2,1&\\*
960e5&1536&1&4,1&4,1&\\*
960e4&1536&1&4,1&4,1&\\*
960e3&1152&1&3,1&3,1&\\*
960e6&2304&1&6,1&6,1&\\*
960e8&4608&1&12,1&12,1&\\*
960e7&4608&1&12,1&12,1&\\*
\hline\end{tabular}\\
\begin{equation*}
\begin{tikzcd}[row sep=large]
 && 960e5\arrow{r}{3}&960e8 \\
\color{red}{960e1} \arrow{r}{2}\arrow{rd}[left]{3}&960e2\arrow{rd}[left,near start]{2}\arrow{ru}{2}\arrow{r}{3}&960e6\arrow{rd}{2}\arrow{ru}[right]{2}\\
&960e3\arrow[crossing over]{ru}[below right, very near start]{2}&960e4\arrow{r}[below]{3}&960e7
\end{tikzcd}
\end{equation*}
\end{center}
\begin{center}\begin{tabular}{|c|c|c|c|c|c|}
   \hline&deg&$c_E$&$[L_f:\cL_E]$&$[H_1(E,\ZZ):\pi^*H_1(X_0(N),\ZZ)]$&Matrice\\\hline990h3&1728&1&1,1&1,1&\multirow{4}{*}{$\begin{pmatrix}0&3&2&0\\0&0&0&2\\0&0&0&3\\0&0&0&0\end{pmatrix}$}\\*
990h1&5184&1&3,1&3,1&\\*
990h4&3456&1&2,1&2,1&\\*
990h2&10368&1&6,1&6,1&\\*
\hline\end{tabular}\\
\begin{equation*}
\begin{tikzcd}[row sep=small]
&990h1\arrow{rd}{2} \\
\color{red}{990h3}\arrow{ru}{3}\arrow{rd}{2}&& 990h2 \\
&990h4\arrow{ru}{3} & \end{tikzcd}
\end{equation*}
\end{center}
\begin{center}\begin{tabular}{|c|c|c|c|c|c|}
   \hline&deg&$c_E$&$[L_f:\cL_E]$&$[H_1(E,\ZZ):\pi^*H_1(X_0(N),\ZZ)]$&Matrice\\\hline1089e1&120&1&1,1&1,1&\multirow{2}{*}{$\begin{pmatrix}0&11\\0&0\end{pmatrix}$}\\*
1089e2&1320&1&11,1&11,1&\\*
\hline\end{tabular}\\
\begin{equation*}
\begin{tikzcd}
\color{red}{1089e1} \arrow{r}{11} & 1089e2
\end{tikzcd}
\end{equation*}
\end{center}
\begin{center}\begin{tabular}{|c|c|c|c|c|c|}
   \hline&deg&$c_E$&$[L_f:\cL_E]$&$[H_1(E,\ZZ):\pi^*H_1(X_0(N),\ZZ)]$&Matrice\\\hline1225e1&1680&1&1,1&1,1&\multirow{2}{*}{$\begin{pmatrix}0&37\\0&0\end{pmatrix}$}\\*
1225e2&62160&1&37,1&37,1&\\*
\hline\end{tabular}\\
\begin{equation*}
\begin{tikzcd}
\color{red}{1225e1} \arrow{r}{37} & 1225e2
\end{tikzcd}
\end{equation*}
\end{center}
\begin{center}\begin{tabular}{|c|c|c|c|c|c|}
   \hline&deg&$c_E$&$[L_f:\cL_E]$&$[H_1(E,\ZZ):\pi^*H_1(X_0(N),\ZZ)]$&Matrice\\\hline1849a1&264&1&1,1&1,1&\multirow{2}{*}{$\begin{pmatrix}0&43\\0&0\end{pmatrix}$}\\*
1849a2&11352&1&43,1&43,1&\\*
\hline\end{tabular}\\
\begin{equation*}
\begin{tikzcd}
\color{red}{1849a1} \arrow{r}{43} & 1849a2
\end{tikzcd}
\end{equation*}
\end{center}
\begin{center}\begin{tabular}{|c|c|c|c|c|c|}
   \hline&deg&$c_E$&$[L_f:\cL_E]$&$[H_1(E,\ZZ):\pi^*H_1(X_0(N),\ZZ)]$&Matrice\\\hline1913b1&309&1&1,1&1,1&\multirow{2}{*}{$\begin{pmatrix}0&0\\2&0\end{pmatrix}$}\\*
1913b2&618&2&1,1/2&2,1&\\*
\hline\end{tabular}\\
\begin{equation*}
\begin{tikzcd}
1913b2 \arrow{r}{2} & \color{red}{1913b1}
\end{tikzcd}
\end{equation*}
\end{center}
\begin{center}\begin{tabular}{|c|c|c|c|c|c|}
   \hline&deg&$c_E$&$[L_f:\cL_E]$&$[H_1(E,\ZZ):\pi^*H_1(X_0(N),\ZZ)]$&Matrice\\\hline1936k1&384&1&1,1&1,1&\multirow{2}{*}{$\begin{pmatrix}0&11\\0&0\end{pmatrix}$}\\*
1936k2&4224&1&11,1&11,1&\\*
\hline\end{tabular}\\
\begin{equation*}
\begin{tikzcd}
\color{red}{1936k1} \arrow{r}{11} & 1936k2
\end{tikzcd}
\end{equation*}
\end{center}
\begin{center}\begin{tabular}{|c|c|c|c|c|c|}
   \hline&deg&$c_E$&$[L_f:\cL_E]$&$[H_1(E,\ZZ):\pi^*H_1(X_0(N),\ZZ)]$&Matrice\\\hline2089b1&219&1&1,1&1,1&\multirow{2}{*}{$\begin{pmatrix}0&0\\2&0\end{pmatrix}$}\\*
2089b2&438&2&1,1/2&2,1&\\*
\hline\end{tabular}\\
\begin{equation*}
\begin{tikzcd}
2089b2 \arrow{r}{2} & \color{red}{2089b1}
\end{tikzcd}
\end{equation*}
\end{center}
\begin{center}\begin{tabular}{|c|c|c|c|c|c|}
   \hline&deg&$c_E$&$[L_f:\cL_E]$&$[H_1(E,\ZZ):\pi^*H_1(X_0(N),\ZZ)]$&Matrice\\\hline2145e1&19968&1&1,1&1,1&\multirow{8}{*}{$\begin{pmatrix}0&2&0&0&0&0&0&0\\0&0&2&2&0&0&0&0\\0&0&0&0&0&0&0&0\\0&0&0&0&2&2&0&0\\0&0&0&0&0&0&0&0\\0&0&0&0&0&0&2&2\\0&0&0&0&0&0&0&0\\0&0&0&0&0&0&0&0\end{pmatrix}$}\\*
2145e2&39936&1&2,1&2,1&\\*
2145e4&79872&1&4,1&4,1&\\*
2145e3&79872&1&4,1&4,1&\\*
2145e6&159744&1&8,1&8,1&\\*
2145e5&159744&1&8,1&8,1&\\*
2145e8&319488&1&16,1&16,1&\\*
2145e7&319488&1&16,1&16,1&\\*
\hline\end{tabular}\\
\begin{equation*}
\begin{tikzcd}
\color{red}{2145e1} \arrow{r}{2}& 2145e2 \arrow{r}{2}\arrow{rd}{2}& 2145e3\arrow{rd}{2}\arrow{r}{2}& 2145e5\arrow{rd}{2}\arrow{r}{2}& 2145e8\\
&& 2145e4&2145e6&2145e7\end{tikzcd}
\end{equation*}
\end{center}
\begin{center}\begin{tabular}{|c|c|c|c|c|c|}
   \hline&deg&$c_E$&$[L_f:\cL_E]$&$[H_1(E,\ZZ):\pi^*H_1(X_0(N),\ZZ)]$&Matrice\\\hline2273a1&392&1&1,1&1,1&\multirow{2}{*}{$\begin{pmatrix}0&0\\2&0\end{pmatrix}$}\\*
2273a2&784&2&1,1/2&2,1&\\*
\hline\end{tabular}\\
\begin{equation*}
\begin{tikzcd}
2273a2 \arrow{r}{2} & \color{red}{2273a1}
\end{tikzcd}
\end{equation*}
\end{center}
\begin{center}\begin{tabular}{|c|c|c|c|c|c|}
   \hline&deg&$c_E$&$[L_f:\cL_E]$&$[H_1(E,\ZZ):\pi^*H_1(X_0(N),\ZZ)]$&Matrice\\\hline2310g1&13824&1&1,1&1,1&\multirow{8}{*}{$\begin{pmatrix}0&2&0&0&3&0&0&0\\0&0&2&2&0&3&0&0\\0&0&0&0&0&0&3&0\\0&0&0&0&0&0&0&3\\0&0&0&0&0&2&0&0\\0&0&0&0&0&0&2&2\\0&0&0&0&0&0&0&0\\0&0&0&0&0&0&0&0\end{pmatrix}$}\\*
2310g2&27648&1&2,1&2,1&\\*
2310g5&55296&1&4,1&4,1&\\*
2310g4&55296&1&4,1&4,1&\\*
2310g3&41472&1&3,1&3,1&\\*
2310g6&82944&1&6,1&6,1&\\*
2310g8&165888&1&12,1&12,1&\\*
2310g7&165888&1&12,1&12,1&\\*
\hline\end{tabular}\\
\begin{equation*}
\begin{tikzcd}[row sep=large]
 && 2310g5\arrow{r}{3}&2310g8 \\
\color{red}{2310g1} \arrow{r}{2}\arrow{rd}[left]{3}&2310g2\arrow{rd}[left,near start]{2}\arrow{ru}{2}\arrow{r}{3}&2310g6\arrow{rd}{2}\arrow{ru}[right]{2}\\
&2310g3\arrow[crossing over]{ru}[below right, very near start]{2}&2310g4\arrow{r}[below]{3}&2310g7
\end{tikzcd}
\end{equation*}
\end{center}
\begin{center}\begin{tabular}{|c|c|c|c|c|c|}
   \hline&deg&$c_E$&$[L_f:\cL_E]$&$[H_1(E,\ZZ):\pi^*H_1(X_0(N),\ZZ)]$&Matrice\\\hline2310l1&2304&1&1,1&1,1&\multirow{8}{*}{$\begin{pmatrix}0&2&0&0&3&0&0&0\\0&0&2&2&0&3&0&0\\0&0&0&0&0&0&3&0\\0&0&0&0&0&0&0&3\\0&0&0&0&0&2&0&0\\0&0&0&0&0&0&2&2\\0&0&0&0&0&0&0&0\\0&0&0&0&0&0&0&0\end{pmatrix}$}\\*
2310l2&4608&1&2,1&2,1&\\*
2310l4&9216&1&4,1&4,1&\\*
2310l5&9216&1&4,1&4,1&\\*
2310l3&6912&1&3,1&3,1&\\*
2310l6&13824&1&6,1&6,1&\\*
2310l7&27648&1&12,1&12,1&\\*
2310l8&27648&1&12,1&12,1&\\*
\hline\end{tabular}\\
\begin{equation*}
\begin{tikzcd}[row sep=large]
 && 2310l4\arrow{r}{3}&2310l7 \\
\color{red}{2310l1} \arrow{r}{2}\arrow{rd}[left]{3}&2310l2\arrow{rd}[left,near start]{2}\arrow{ru}{2}\arrow{r}{3}&2310l6\arrow{rd}{2}\arrow{ru}[right]{2}\\
&2310l3\arrow[crossing over]{ru}[below right, very near start]{2}&2310l5\arrow{r}[below]{3}&2310l8
\end{tikzcd}
\end{equation*}
\end{center}
\begin{center}\begin{tabular}{|c|c|c|c|c|c|}
   \hline&deg&$c_E$&$[L_f:\cL_E]$&$[H_1(E,\ZZ):\pi^*H_1(X_0(N),\ZZ)]$&Matrice\\\hline2310t1&3456&1&1,1&1,1&\multirow{8}{*}{$\begin{pmatrix}0&2&0&0&3&0&0&0\\0&0&2&2&0&3&0&0\\0&0&0&0&0&0&3&0\\0&0&0&0&0&0&0&3\\0&0&0&0&0&2&0&0\\0&0&0&0&0&0&2&2\\0&0&0&0&0&0&0&0\\0&0&0&0&0&0&0&0\end{pmatrix}$}\\*
2310t2&6912&1&2,1&2,1&\\*
2310t5&13824&1&4,1&4,1&\\*
2310t4&13824&1&4,1&4,1&\\*
2310t3&10368&1&3,1&3,1&\\*
2310t6&20736&1&6,1&6,1&\\*
2310t8&41472&1&12,1&12,1&\\*
2310t7&41472&1&12,1&12,1&\\*
\hline\end{tabular}\\
\begin{equation*}
\begin{tikzcd}[row sep=large]
 && 2310t5\arrow{r}{3}&2310t8 \\
\color{red}{2310t1} \arrow{r}{2}\arrow{rd}[left]{3}&2310t2\arrow{rd}[left,near start]{2}\arrow{ru}{2}\arrow{r}{3}&2310t6\arrow{rd}{2}\arrow{ru}[right]{2}\\
&2310t3\arrow[crossing over]{ru}[below right, very near start]{2}&2310t4\arrow{r}[below]{3}&2310t7
\end{tikzcd}
\end{equation*}
\end{center}
\begin{center}\begin{tabular}{|c|c|c|c|c|c|}
   \hline&deg&$c_E$&$[L_f:\cL_E]$&$[H_1(E,\ZZ):\pi^*H_1(X_0(N),\ZZ)]$&Matrice\\\hline2352j1&1680&1&1,1&1,1&\multirow{2}{*}{$\begin{pmatrix}0&13\\0&0\end{pmatrix}$}\\*
2352j2&21840&1&13,1&13,1&\\*
\hline\end{tabular}\\
\begin{equation*}
\begin{tikzcd}
\color{red}{2352j1} \arrow{r}{13} & 2352j2
\end{tikzcd}
\end{equation*}
\end{center}
\begin{center}\begin{tabular}{|c|c|c|c|c|c|}
   \hline&deg&$c_E$&$[L_f:\cL_E]$&$[H_1(E,\ZZ):\pi^*H_1(X_0(N),\ZZ)]$&Matrice\\\hline3249a1&640&1&1,1&1,1&\multirow{2}{*}{$\begin{pmatrix}0&19\\0&0\end{pmatrix}$}\\*
3249a2&12160&1&19,1&19,1&\\*
\hline\end{tabular}\\
\begin{equation*}
\begin{tikzcd}
\color{red}{3249a1} \arrow{r}{19} & 3249a2
\end{tikzcd}
\end{equation*}
\end{center}
\begin{center}\begin{tabular}{|c|c|c|c|c|c|}
   \hline&deg&$c_E$&$[L_f:\cL_E]$&$[H_1(E,\ZZ):\pi^*H_1(X_0(N),\ZZ)]$&Matrice\\\hline4489a1&1292&1&1,1&1,1&\multirow{2}{*}{$\begin{pmatrix}0&67\\0&0\end{pmatrix}$}\\*
4489a2&86564&1&67,1&67,1&\\*
\hline\end{tabular}\\
\begin{equation*}
\begin{tikzcd}
\color{red}{4489a1} \arrow{r}{67} & 4489a2
\end{tikzcd}
\end{equation*}
\end{center}
\begin{center}\begin{tabular}{|c|c|c|c|c|c|}
   \hline&deg&$c_E$&$[L_f:\cL_E]$&$[H_1(E,\ZZ):\pi^*H_1(X_0(N),\ZZ)]$&Matrice\\\hline14450p1&12240&1&1,1&1,1&\multirow{2}{*}{$\begin{pmatrix}0&17\\0&0\end{pmatrix}$}\\*
14450p2&208080&1&17,1&17,1&\\*
\hline\end{tabular}\\
\begin{equation*}
\begin{tikzcd}
\color{red}{14450p1} \arrow{r}{17} & 14450p2
\end{tikzcd}
\end{equation*}
\end{center}
\begin{center}\begin{tabular}{|c|c|c|c|c|c|}
   \hline&deg&$c_E$&$[L_f:\cL_E]$&$[H_1(E,\ZZ):\pi^*H_1(X_0(N),\ZZ)]$&Matrice\\\hline16641e1&8448&1&1,1&1,1&\multirow{2}{*}{$\begin{pmatrix}0&43\\0&0\end{pmatrix}$}\\*
16641e2&363264&1&43,1&43,1&\\*
\hline\end{tabular}\\
\begin{equation*}
\begin{tikzcd}
\color{red}{16641e1} \arrow{r}{43} & 16641e2
\end{tikzcd}
\end{equation*}
\end{center}
\begin{center}\begin{tabular}{|c|c|c|c|c|c|}
   \hline&deg&$c_E$&$[L_f:\cL_E]$&$[H_1(E,\ZZ):\pi^*H_1(X_0(N),\ZZ)]$&Matrice\\\hline26569a1&59368&1&1,1&1,1&\multirow{2}{*}{$\begin{pmatrix}0&163\\0&0\end{pmatrix}$}\\*
26569a2&9676984&1&163,1&163,1&\\*
\hline\end{tabular}\\
\begin{equation*}
\begin{tikzcd}
\color{red}{26569a1} \arrow{r}{163} & 26569a2
\end{tikzcd}
\end{equation*}
\end{center}
\begin{center}\begin{tabular}{|c|c|c|c|c|c|}
   \hline&deg&$c_E$&$[L_f:\cL_E]$&$[H_1(E,\ZZ):\pi^*H_1(X_0(N),\ZZ)]$&Matrice\\\hline130050em1&1769472&1&1,1&1,1&\multirow{8}{*}{$\begin{pmatrix}0&2&0&0&3&0&0&0\\0&0&2&2&0&3&0&0\\0&0&0&0&0&0&3&0\\0&0&0&0&0&0&0&3\\0&0&0&0&0&2&0&0\\0&0&0&0&0&0&2&2\\0&0&0&0&0&0&0&0\\0&0&0&0&0&0&0&0\end{pmatrix}$}\\*
130050em2&3538944&1&2,1&2,1&\\*
130050em4&7077888&1&4,1&4,1&\\*
130050em5&7077888&1&4,1&4,1&\\*
130050em3&5308416&1&3,1&3,1&\\*
130050em6&10616832&1&6,1&6,1&\\*
130050em7&21233664&1&12,1&12,1&\\*
130050em8&21233664&1&12,1&12,1&\\*
\hline\end{tabular}\\
\begin{equation*}
\begin{tikzcd}[row sep=large]
 && 130050em4\arrow{r}{3}&130050em7 \\
\color{red}{130050em1} \arrow{r}{2}\arrow{rd}[left]{3}&130050em2\arrow{rd}[left,near start]{2}\arrow{ru}{2}\arrow{r}{3}&130050em6\arrow{rd}{2}\arrow{ru}[right]{2}\\
&130050em3\arrow[crossing over]{ru}[below right, very near start]{2}&130050em5\arrow{r}[below]{3}&130050em8
\end{tikzcd}
\end{equation*}
\end{center}

\end{document}